\documentclass[11pt]{article}

\usepackage[T1]{fontenc}
\usepackage[utf8]{inputenc}
\usepackage{lmodern}
\usepackage{microtype}
\usepackage{indentfirst}
\usepackage[a4paper,margin=1in]{geometry}
\usepackage{amsmath,amssymb,amsthm}
\usepackage{empheq}
\usepackage{aliascnt}
\usepackage{needspace}
\usepackage{xcolor}
\usepackage{hyperref}
\usepackage[nameinlink,capitalise,noabbrev]{cleveref}

\hypersetup{
  colorlinks=true,
  linkcolor=blue,
  citecolor=blue,
  urlcolor=blue,
  pdftitle={A sharp weighted commutator inequality for rank-one matrices},
  pdfauthor={Zhekai Pang}
}

\makeatletter
\renewcommand{\tagform@}[1]{%
  \maketag@@@{\color{blue}(\ignorespaces#1\unskip\@@italiccorr)}%
}
\makeatother

\newtheorem{theorem}{Theorem}[section]
\newaliascnt{lemma}{theorem}
\newtheorem{lemma}[lemma]{Lemma}
\aliascntresetthe{lemma}
\newaliascnt{corollary}{theorem}
\newtheorem{corollary}[corollary]{Corollary}
\aliascntresetthe{corollary}
\newaliascnt{proposition}{theorem}
\newtheorem{proposition}[proposition]{Proposition}
\aliascntresetthe{proposition}
\theoremstyle{remark}
\newaliascnt{remark}{theorem}

\aliascntresetthe{remark}

\newcommand{\C}{\mathbb C}
\newcommand{\R}{\mathbb R}
\newcommand{\F}{\mathrm F}
\newcommand{\norm}[1]{\lVert#1\rVert}
\newcommand{\abs}[1]{\lvert#1\rvert}
\newcommand{\CalC}{\mathcal C}
\DeclareMathOperator{\tr}{tr}
\DeclareMathOperator{\rank}{rank}
\DeclareMathOperator{\spanof}{span}
\DeclareMathOperator{\diag}{diag}

\title{A sharp weighted commutator inequality
for rank-one matrices}
\author{Zhekai Pang\\
Universitat Pompeu Fabra\\
\texttt{zhekai.pang@upf.edu}}
\date{}

\begin{document}
\maketitle

\begin{abstract}
Let $A\in\C^{n\times n}$ be positive definite Hermitian, where $n\ge2$,
and define $\norm{X}_A^2=\tr(AX^*X)$ for all $X\in\C^{n\times n}$.
We determine the smallest constant $\CalC_A$ for which
$$
 \norm{XY-YX}_{\F}^{2}
 \le \CalC_A\norm{X}_A^2\norm{Y}_A^2
$$
holds for all rank-one $X,Y\in\C^{n\times n}$, where $\norm{\cdot}_{\F}$
denotes the Frobenius norm. The constant depends only on the two smallest eigenvalues of $A$ and
is determined by a strictly monotone scalar equation. This answers the
sharp-constant question for rank-one matrices formulated by Fang and
Cheng. We characterize all equality pairs. Also, we present an
application to density matrices in quantum physics.
\end{abstract}

\medskip
\noindent\textbf{Keywords.} B\"ottcher--Wenzel inequality, weighted
Frobenius norm, rank-one matrix, density matrix.

\smallskip
\noindent\textbf{2020 Mathematics Subject Classification.}
Primary 15A45. Secondary 15A42, 15A60.

\section{Introduction}

Let $M_n(\C)$ denote the space of $n\times n$ complex matrices, where
$n\ge2$. The B\"ottcher--Wenzel inequality~\cite{BW,VJ} states that
\[
 \norm{XY-YX}_{\F}^{2}
 \le 2\norm{X}_{\F}^{2}\norm{Y}_{\F}^{2}
\]
for all $X,Y\in M_n(\C)$, where $\norm{X}_{\F}^{2}=\tr(X^*X)$ denotes
the squared Frobenius norm.
Mayumi, Kimura, Ohno, and Chru\'sci\'nski~\cite{MKOC} introduced
weighted versions using
\[
 \norm{X}_A^2=\tr(AX^*X),
\]
where $A\in M_n(\C)$ is positive definite Hermitian and $X^*$ denotes
the conjugate transpose of $X\in M_n(\C)$. Throughout the paper, the
eigenvalues of $A$ are ordered as
\[
 0<\lambda_1\le\lambda_2\le\cdots\le\lambda_n,
\]
with multiplicity. In particular, $\lambda_1$ and $\lambda_2$ are its
two smallest eigenvalues.

Fang and Cheng~\cite[Theorem~2.4]{FC} proved
\begin{equation}\label{eq:known-fully-weighted}
 \norm{XY-YX}_A^2
 \le \left(\frac{1}{\lambda_1}+\frac{1}{\lambda_2}\right)
       \norm{X}_A^2\norm{Y}_A^2
\end{equation}
for all rank-one $X,Y\in M_n(\C)$, with the best possible constant.
The inequality is also known when one factor is normal, by Kimura, Mayumi, Ohno, Lee,
and Chru\'sci\'nski~\cite[Supplementary Information, Theorem~1]{KMOLC}.

We consider the different problem with an unweighted output norm,
whose sharp constant is
\begin{equation}\label{eq:C-def}
 \CalC_A
 =\max_{\substack{X,Y\in M_n(\C)\\\rank X=\rank Y=1}}
   \frac{\norm{XY-YX}_{\F}^{2}}
        {\norm{X}_A^2\norm{Y}_A^2}.
\end{equation}
Fang and Cheng~\cite[Section~4]{FC} obtained the upper bound
$\lambda_1^{-1}(\lambda_1^{-1}+\lambda_2^{-1})$ for $\CalC_A$.
They showed that it is sharp when $\lambda_1=\lambda_2$ but not when
$\lambda_1<\lambda_2$, and observed that the problem reduces to dimension
two. They left the exact constant in the latter case undetermined.
For unweighted input norms, the singular-value region was studied by
Wenzel~\cite{Wenzel}, determined in dimension two by Cheng and
Liang~\cite{CL}, and determined in arbitrary dimension by Cheng and
Jiao~\cite{CJ}. Further sharp bounds appear in~\cite{CAJ}.

Our main result determines~\eqref{eq:C-def}.

\begin{theorem}\label{thm:main}
Let $A\in M_n(\C)$ be positive definite Hermitian, and let
$\lambda_1\le\lambda_2$ be its two smallest eigenvalues, counted with
multiplicity. If $\lambda_1=\lambda_2$, then $\CalC_A=2/\lambda_1^2$.
If $\lambda_1<\lambda_2$, there is a unique pair of real numbers
$0<x_0<t_0<1$ satisfying
\begin{equation}\label{eq:optimal-system}
 \begin{cases}
  (t_0-x_0)(1-t_0^2)=4t_0x_0^2(1+x_0t_0),\\[5pt]
  \displaystyle\frac{t_0^2-x_0^2}{(t_0+3x_0)(1+x_0t_0)}
  =\frac{\lambda_2-\lambda_1}{\lambda_2+\lambda_1}.
 \end{cases}
\end{equation}
For this pair, $t_0>{(\lambda_2-\lambda_1)/(\lambda_2+\lambda_1)}$ and
\begin{equation}\label{eq:C-formula}
 \CalC_A
 =\frac{8(1+x_0t_0)^3}
 {\bigl[\lambda_1(1+t_0)+\lambda_2(1-t_0)\bigr]^2(1+t_0^2+2x_0t_0)}.
\end{equation}
Thus the constant depends only on $\lambda_1$ and $\lambda_2$.
For real symmetric $A$, it is also the sharp constant for all
real rank-one $X,Y\in M_n(\R)$.
\end{theorem}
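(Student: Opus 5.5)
Write $X=uv^*$ and $Y=wz^*$ with $u,v,w,z\in\C^n$. Then $\norm{X}_A^2=\tr(A vu^*uv^*)=\norm{u}^2\,v^*Av$, and similarly $\norm{Y}_A^2=\norm{w}^2\,z^*Az$. The commutator is
\[
XY-YX=(v^*w)\,uz^*-(z^*u)\,wv^*,
\]
so
\[
\norm{XY-YX}_\F^2=\abs{v^*w}^2\norm{u}^2\norm{z}^2+\abs{z^*u}^2\norm{w}^2\norm{v}^2-2\,\mathrm{Re}\bigl[(v^*w)(u^*z)(w^*u)(z^*v)\bigr].
\]
Normalize $\norm{u}=\norm{w}=1$ and $v^*Av=z^*Az=1$. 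The quotient in~\eqref{eq:C-def} is then a function of the four vectors alone. For fixed $v,z$ I optimize over the unit vectors $u,w$. I expect this inner maximum to be attained with $u,w\in\spanof\{v,z\}$: components orthogonal to this span only decrease $\abs{v^*w}$ and $\abs{z^*u}$ and do not help the cross term. After a phase choice that makes the cross term as negative as possible, one gets a closed-form expression in $\norm{v},\norm{z}$ and $\abs{v^*z}$, essentially $\tfrac12\bigl(\norm v^2\norm z^2+\abs{v^*z}^2\bigr)$ maximized over a $2\times2$ Gram structure. This fits with the Fang--Cheng reduction to dimension two: only $\spanof\{v,z\}$ matters.

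Next I reduce the outer problem to the two smallest eigenvalues. Let $P$ be the $2\times2$ compression of $A$ to $\spanof\{v,z\}$. By Cauchy interlacing, its eigenvalues satisfy $\mu_1\ge\lambda_1$ and $\mu_2\ge\lambda_2$. The quotient is decreasing in $P$ in the Loewner order, because the constraint $v^*Av=1$ with a larger $A$ forces smaller vectors. Hence the extremum is achieved when $P=\diag(\lambda_1,\lambda_2)$, realized on the corresponding eigenvectors. This shows that $\CalC_A$ depends only on $\lambda_1,\lambda_2$. It also shows that for real symmetric $A$ the extremizers can be taken real, provided the inner phase choice is real, which I will check.

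I am then left with a finite-dimensional calculus problem in $\C^2$ with $A=\diag(\lambda_1,\lambda_2)$. Parametrize $v=(\cos\alpha/\sqrt{\lambda_1},\ \sin\alpha/\sqrt{\lambda_2})$, and $z$ likewise with an angle $\beta$ and a relative phase. The phase is again chosen to maximize, which should make everything real. Set $r=(\lambda_2-\lambda_1)/(\lambda_2+\lambda_1)$. Rewriting $\norm v^2=\bigl[(1+r\cos2\alpha)\bigr]/\bigl[\lambda_1(1+r)\bigr]$-type expressions in the variables $t=\cos2\alpha$-like and $x$ (a combination of the two angles) should produce the rational function in~\eqref{eq:C-formula}. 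The two stationarity conditions should be exactly~\eqref{eq:optimal-system}. The case $\lambda_1=\lambda_2$ ($r=0$) gives $x_0=t_0=0$ and $2/\lambda_1^2$, consistent with Fang--Cheng.

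The main obstacle is proving that the critical point is unique and that it is the global maximum. My first attempt is to eliminate $x_0$ using the first equation of~\eqref{eq:optimal-system}. For fixed $t\in(0,1)$, its right side minus its left side is strictly increasing in $x$ on $(0,t)$, with opposite signs at the endpoints, so $x=x(t)$ is uniquely defined. Substituting into the second equation gives a scalar function of $t$. I would show it is strictly increasing from $0$ up to some value above $r$ as $t$ goes from $0$ to $1$, and that it stays below $t$, which yields $t_0>r$. Finally, I would check the boundary behaviour of the objective (as $t\to\pm1$, and the degenerate configurations with $x=0$ or $x=t$) to confirm that the interior critical point is the maximum. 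The equality pairs then come from tracing equality back through each reduction step.
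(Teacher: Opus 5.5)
Your overall architecture (maximize over the left vectors, reduce to $\lambda_1,\lambda_2$ by a Loewner-order comparison, then use a monotone scalar function for uniqueness) parallels the paper. The Loewner step itself is sound: it is the paper's comparison $B\ge H$ in \cref{lem:Rayleigh}.

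The genuine gap is the inner maximization over $u,w$. This is the heart of the problem, and you replace it with a heuristic. The in-plane conclusion is true, but your reason for it is not, and the closed form you propose is wrong. Relative to $\spanof\{v,z\}$, write $u=u_\parallel+u_\perp$ and $w=w_\parallel+w_\perp$. Then $XY-YX$ is the Frobenius-orthogonal sum of an in-plane piece and $(v^*w)u_\perp z^*-(z^*u)w_\perp v^*$, so the out-of-plane parts add a nonnegative amount. Equivalently, $u^*w$ in your cross term contains $u_\perp^*w_\perp$, whose phase can be chosen favourably. Leaving the plane is therefore a real trade-off against shrinking $\abs{v^*w}$ and $\abs{z^*u}$, not a pure loss. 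The paper settles this in \cref{lem:numerator}:
it records the relative position of $\spanof\{a,c\}$ and $\spanof\{b,d\}$ in a $2\times2$ contraction $K$, notes that the squared commutator norm $\Phi(K)$ is convex, reduces to unitary $K$ by Russo--Dye, and finishes with the triangle inequality. This yields the sharp bound $f(x,t)$, where $x=\abs{u^*w}$ and $t=\abs{v^*z}$ for unit vectors.

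Your closed form already fails for orthonormal $v,z$. Taking $u=z$, $w=v$ gives $XY-YX=zz^*-vv^*$, of squared norm $2$, whereas $\frac12(\norm{v}^2\norm{z}^2+\abs{v^*z}^2)=\frac12$. The true inner maximum is $F(t)=\max_x f(x,t)$. Its maximizer $x_*(t)$ is the root of the cubic in the first line of~\eqref{eq:optimal-system}, and there is no elementary closed form. So $x_0$ is the optimal overlap of the \emph{left} vectors, not a combination of the angles of $v$ and $z$.

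The second missing ingredient is the decoupling that makes the outer problem one-dimensional. Normalize all four vectors to unit Euclidean length rather than $v^*Av=z^*Az=1$. Then at fixed $t=\abs{v^*z}$ the numerator's maximum $F(t)$ does not involve $A$, and the denominator's minimum $D(t)$ is a separate Rayleigh-quotient problem, giving $\CalC_A=\max_t F(t)/D(t)$ (\cref{prop:reduction}). Computing $D(t)$ reveals two regimes, with your $r=(\lambda_2-\lambda_1)/(\lambda_2+\lambda_1)$:
\begin{itemize}
\item for $t\le r$, $D(t)=\lambda_1\lambda_2(1-t^2)$, attained for $t<r$ by pairs whose two Rayleigh quotients differ (at $t=0$, by orthonormal eigenvectors for $\lambda_1$ and $\lambda_2$);
\item only for $t>r$ is $D(t)$ the symmetric value $\frac14[\lambda_1(1+t)+\lambda_2(1-t)]^2$.
\end{itemize}
Your plan does not see the first regime. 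Your expectation that stationarity in the angles of $v,z$ produces~\eqref{eq:optimal-system} is also unsupported. The first equation is stationarity in $x$, which your parametrization does not contain, and the second is valid only on the branch $t>r$.

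Uniqueness of the solution of the system also does not by itself identify the global maximum. In the paper this comes from the identity $h=-F'/(2F-tF')$ of \cref{lem:scalar}. It gives $(F/D)'$ the sign of $t-h(t)>0$ on $(0,r)$ and the sign of $h(t_0)-h(t)$ on $(r,1)$. You correctly anticipate that $h$ is increasing with $0<h(t)<t$. However, you give no argument for this; the paper factors $h$ and uses implicit differentiation of the cubic to show that $x_*(t)/t$ is decreasing. You also never link $h$ to the derivative of the objective.
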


Sections~\ref{sec:numerator} and~\ref{sec:weight} give sharp bounds for
the numerator and denominator in~\eqref{eq:C-def}.
Section~\ref{sec:scalar} determines the sharp constant.
Section~\ref{sec:equality} describes all equality cases and gives
consequences of the main result. Section~\ref{sec:density} presents an
application to density matrices in quantum physics.

\section{A sharp unweighted estimate}\label{sec:numerator}

For rank-one $X,Y\in M_n(\C)$ of unit Frobenius norm, write $X=ab^*$
and $Y=cd^*$ with unit vectors $a,b,c,d\in\C^n$.
Below, $\norm{\cdot}$ is the Euclidean norm and $x,t$ are arbitrary
parameters, distinct from the optimal pair $x_0,t_0$ in~\cref{thm:main}.

\begin{lemma}\label{lem:numerator}
Let $a,b,c,d\in\C^n$ be unit vectors, and write
$x=\abs{a^*c}$ and $t=\abs{b^*d}$. Then
\begin{equation}\label{eq:f-bound}
 \norm{ab^*cd^*-cd^*ab^*}_{\F}^{2}
 \le f(x,t)
 :=(1+xt)\left[1-xt+\sqrt{(1-x^2)(1-t^2)}\right].
\end{equation}
For every pair of unit vectors $b,d\in\C^n$ and every $x\in[0,1]$,
there are unit vectors $a,c\in\C^n$ with $\abs{a^*c}=x$ that attain
this bound.

If $0<x,t<1$, $b^*d=t$, and $a^*c=-x$, then there exist orthonormal
pairs $e_1,e_2\in\C^n$ and $u_1,u_2\in\C^n$ such that
\begin{align}
 b&=\sqrt{\frac{1+t}{2}}\,e_1+\sqrt{\frac{1-t}{2}}\,e_2,
 &d&=\sqrt{\frac{1+t}{2}}\,e_1-\sqrt{\frac{1-t}{2}}\,e_2,
 \label{eq:right-vectors}\\
 a&=\sqrt{\frac{1-x}{2}}\,u_1-\sqrt{\frac{1+x}{2}}\,u_2,
 &c&=\sqrt{\frac{1-x}{2}}\,u_1+\sqrt{\frac{1+x}{2}}\,u_2.
 \label{eq:left-vectors}
\end{align}
Under these assumptions, equality in~\eqref{eq:f-bound} holds if and
only if $u_1=ze_1$ and $u_2=ze_2$ for some $z\in\C$ with $\abs{z}=1$.
\end{lemma}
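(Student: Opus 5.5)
The plan is to expand the commutator explicitly, reduce the bound to a small optimization over angles between the four unit vectors, and then construct the extremal vectors in the span of two orthonormal vectors. Since $ab^*cd^*=(b^*c)\,ad^*$ and $cd^*ab^*=(d^*a)\,cb^*$, set $\beta=b^*c$ and $\delta=d^*a$. Using $\tr(da^*cb^*)=(a^*c)(b^*d)$ we get
\[
 \norm{ab^*cd^*-cd^*ab^*}_{\F}^{2}
 =\abs{\beta}^2+\abs{\delta}^2-2\,\mathrm{Re}\bigl(\bar\beta\,\delta\,(a^*c)(b^*d)\bigr).
\]
The quantities involved are invariant under independent phase changes $a\mapsto e^{i\theta}a$, and similarly for $b,c,d$. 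We may therefore normalize $a^*c=-x$ and $b^*d=t$, which is exactly the normalization in the second half of the lemma. The expression becomes $\abs{\beta}^2+\abs{\delta}^2+2xt\,\mathrm{Re}(\bar\beta\delta)$, which is at most $p^2+q^2+2xtpq$ with $p=\abs{b^*c}$ and $q=\abs{a^*d}$.

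\textbf{Angle constraint.} Write $x=\cos\alpha$ and $t=\cos\tau$ with $\alpha,\tau\in[0,\pi/2]$. A direct computation gives
\[
 f(x,t)=(1+xt)\bigl(1-\cos(\alpha+\tau)\bigr)=2(1+xt)\cos^2\tfrac{\pi-\alpha-\tau}{2}.
\]
This suggests that the relevant constraint is $\varphi_1+\varphi_2\ge\pi-\alpha-\tau$, where $p=\cos\varphi_1$ and $q=\cos\varphi_2$ are measured as real angles in $\R^{2n}\cong\C^n$. To obtain it, I would bound the real parts $\mathrm{Re}(\bar\beta\delta)$ and the phases jointly rather than using only moduli. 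Concretely, I would rotate phases so that $b^*c$ and $a^*d$ are real, and use the spherical triangle inequality for real angles in $\R^{2n}$ along the chain $a\to d\to b\to c\to -a$. The angle from $c$ to $-a$ is $\pi-\alpha$, because $\mathrm{Re}(c^*(-a))=x$ is fixed.

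This phase bookkeeping is the step I expect to be the main obstacle. The cross term only has the bad sign when the phases are aligned, and that alignment is what couples the moduli constraint to $\pi-\alpha-\tau$. If the angle argument becomes messy, my fallback is to reduce to the span of $b,d$ together with the components of $a,c$ in that span and their orthogonal complements, and then maximize explicitly in coordinates.

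\textbf{Maximization.} Given the constraint, maximizing $\cos^2\varphi_1+\cos^2\varphi_2+2xt\cos\varphi_1\cos\varphi_2$ under $\varphi_1+\varphi_2=s$ gives the symmetric split $\varphi_1=\varphi_2=s/2$, by a concavity or Lagrange argument. The resulting value is $(1+xt)(1+\cos s)$, and substituting $s=\pi-\alpha-\tau$ recovers $f$.

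\textbf{Attainment and the equality case.} For attainment I would take $b,d$ as in \eqref{eq:right-vectors}, which is always possible: set $e_1,e_2$ proportional to $b+d$ and $b-d$ after the phase normalization, with the endpoint cases $t\in\{0,1\}$ handled separately. Then take $a,c$ as in \eqref{eq:left-vectors} with $u_i=e_i$, and verify by direct computation that the norm equals $f(x,t)$.

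For the equality characterization when $0<x,t<1$, the representation \eqref{eq:left-vectors} for $a,c$ comes from the same construction: $u_1$ and $u_2$ are proportional to $a+c$ and $c-a$. Equality forces every inequality in the chain to be tight. Tightness means $\varphi_1=\varphi_2$, aligned phases, and a geodesic chain, and this forces $a,c$ to lie in $\spanof\{e_1,e_2\}$ in the prescribed positions up to a common unimodular factor, giving $u_i=ze_i$. The converse direction is the same direct computation as in the attainment step.
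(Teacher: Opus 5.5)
\textbf{There is a genuine gap: the key inequality \eqref{eq:f-bound} is never proved.} Your expansion $\norm{ab^*cd^*-cd^*ab^*}_{\F}^2=\abs{\beta}^2+\abs{\delta}^2+2xt\operatorname{Re}(\bar\beta\delta)$ after normalizing $a^*c=-x$, $b^*d=t$ is correct. So are the formula $f=2(1+xt)\cos^2\frac{\pi-\alpha-\tau}{2}$ and the attainment construction with $u_i=e_i$; these match the paper. The problem is the ``angle constraint'', and neither reading of it works.

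If $p=\abs{b^*c}$ and $q=\abs{a^*d}$ are moduli, the constraint $\arccos p+\arccos q\ge\pi-\alpha-\tau$ is false. Take $c=-a$, $d=b$ and $\abs{a^*b}=s>0$. Then $x=t=1$, so the constraint would force $p=q=0$, yet $p=q=s$. Worse, here $p^2+q^2+2xtpq=4s^2$ while $f(1,1)=0$, and by continuity the same happens for $x,t$ slightly below $1$. So once you pass to $p^2+q^2+2xtpq$, no constraint on $(p,q,x,t)$ alone can bring you back to $f$.

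If instead $\varphi_1,\varphi_2$ are real angles in $\R^{2n}$, the spherical triangle inequality along $a\to d\to b\to c\to-a$ is valid. (The angle from $c$ to $-a$ is $\alpha$, not $\pi-\alpha$.) But it only controls $\operatorname{Re}\beta$ and $\operatorname{Re}\delta$. Your expression equals $Q(\operatorname{Re}\beta,\operatorname{Re}\delta)+Q(\operatorname{Im}\beta,\operatorname{Im}\delta)$ with $Q(u,v)=u^2+v^2+2xtuv$. You cannot gauge both $b^*c$ and $a^*d$ to be real while keeping $a^*c=-x$ and $b^*d=t$, because $\overline{(b^*c)}\,(d^*a)(a^*c)(b^*d)$ is invariant under all phase changes of $a,b,c,d$. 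Since $(a,c)\mapsto(ia,ic)$ preserves $a^*c$ and turns imaginary parts into real parts, bounding the two $Q$-terms separately gives at best $2f(x,t)$. In addition, on $\{(\varphi_1,\varphi_2)\in[0,\pi]^2:\varphi_1+\varphi_2\ge s\}$ the symmetric split is not the maximum: $\varphi_1=\varphi_2=\pi$ gives $2+2xt$. Real spherical geometry does not see the complex structure that makes the bound true.

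What is missing is an argument that handles the relative phase and the components of $a,c$ orthogonal to $\spanof\{b,d\}$ at the same time. The paper does this as follows.
\begin{itemize}
\item It writes $E=\tfrac12(1+xt)\abs{\beta+\delta}^2+\tfrac12(1-xt)\abs{\beta-\delta}^2$.
\item It expresses $\beta\pm\delta$ linearly in the entries of the $2\times2$ contraction $K=(e_i^*u_j)$, so $E=\Phi(K)$ is convex in $K$.
\item The Russo--Dye theorem reduces to unitary $K$. There the relations $\abs{e_1^*u_1}=\abs{e_2^*u_2}$ and $\abs{e_1^*u_2}=\abs{e_2^*u_1}$, the triangle inequality, and $1-xt\le1+xt$ give $f$.
\end{itemize}
Your equality argument inherits the gap. ``Tightness of a geodesic chain'' is not available, and you would still have to rule out components of $a,c$ outside $\spanof\{e_1,e_2\}$. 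The paper does this by convexity: every unitary in the decomposition must be some $zI_2$, and $\Phi(zI_2)=\abs{z}^2f(x,t)$.
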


\begin{proof}
Multiplying $c$ and $d$ independently by scalars of modulus one changes
$cd^*$ only by a scalar of modulus one, and therefore preserves the
Frobenius norm of the commutator. Since $x=\abs{a^*c}$ and
$t=\abs{b^*d}$ are also unchanged, we may assume $a^*c=-x$ and $b^*d=t$.
Since the four vectors have unit length, $\norm{b+d}^2=2(1+t)$,
$\norm{b-d}^2=2(1-t)$, $\norm{a+c}^2=2(1-x)$,
$\norm{c-a}^2=2(1+x)$, $(b+d)^*(b-d)=0$, and $(a+c)^*(c-a)=0$.

If $t<1$, both $b+d$ and $b-d$ are nonzero. We may therefore define
\begin{equation}\label{eq:right-basis}
 e_1=\frac{b+d}{\sqrt{2(1+t)}},\qquad
 e_2=\frac{b-d}{\sqrt{2(1-t)}}.
\end{equation}
The norm and inner-product identities above show that $e_1,e_2$ are
orthonormal. If $t=1$, then $b=d$. In this case take $e_1=b=d$
and choose any unit vector $e_2\perp e_1$ instead.

If $x<1$, both $a+c$ and $c-a$ are nonzero. Define
\[
 u_1=\frac{a+c}{\sqrt{2(1-x)}},\qquad
 u_2=\frac{c-a}{\sqrt{2(1+x)}}.
\]
These vectors are orthonormal by the same calculations.
If $x=1$, then $c=-a$. In this case take $u_2=c=-a$ and choose
any unit vector $u_1\perp u_2$. The required orthogonal unit vectors
exist because $n\ge2$.

To recover $b,d$ and $a,c$, add and subtract their sums and differences:
\begin{align*}
 b&=\frac{(b+d)+(b-d)}{2}
   =\sqrt{\frac{1+t}{2}}\,e_1+\sqrt{\frac{1-t}{2}}\,e_2,\\
 d&=\frac{(b+d)-(b-d)}{2}
   =\sqrt{\frac{1+t}{2}}\,e_1-\sqrt{\frac{1-t}{2}}\,e_2,\\
 a&=\frac{(a+c)-(c-a)}{2}
   =\sqrt{\frac{1-x}{2}}\,u_1-\sqrt{\frac{1+x}{2}}\,u_2,\\
 c&=\frac{(a+c)+(c-a)}{2}
   =\sqrt{\frac{1-x}{2}}\,u_1+\sqrt{\frac{1+x}{2}}\,u_2.
\end{align*}
When $t=1$, the coefficient of $e_2$ is zero. When $x=1$, the
coefficient of $u_1$ is zero. Thus these identities prove
\eqref{eq:right-vectors} and~\eqref{eq:left-vectors} for all
$x,t\in[0,1]$, including the endpoints.

We next compare the two orthonormal pairs. Define
\[
 K=\begin{pmatrix}e_1^*u_1&e_1^*u_2\\e_2^*u_1&e_2^*u_2\end{pmatrix}.
\]
For any $v=(v_1,v_2)^\top\in\C^2$, let $w=v_1u_1+v_2u_2$.
Because $u_1,u_2$ are orthonormal,
$\norm{w}^2=\abs{v_1}^2+\abs{v_2}^2=\norm{v}^2$.
The definition of $K$ gives $Kv=(e_1^*w,e_2^*w)^\top$.
The vector $w-(e_1^*w)e_1-(e_2^*w)e_2$ is orthogonal to both
$e_1$ and $e_2$. Applying the Pythagorean identity to these three
orthogonal components of $w$, we obtain
\[
 \norm{Kv}^2=\abs{e_1^*w}^2+\abs{e_2^*w}^2
 =\norm{w}^2-\norm{w-(e_1^*w)e_1-(e_2^*w)e_2}^2\le\norm{v}^2.
\]
Thus $\norm{Kv}\le\norm{v}$ for every $v\in\C^2$, so $K$ is a contraction.

\Needspace{18\baselineskip}
\begin{align}
 \norm{ab^*cd^*-cd^*ab^*}_{\F}^{2}
 &=\norm{(b^*c)ad^*-(d^*a)cb^*}_{\F}^{2}\notag\\
 &=\abs{b^*c}^2\norm{ad^*}_{\F}^{2}
   +\abs{d^*a}^2\norm{cb^*}_{\F}^{2}
   -2\operatorname{Re}\bigl[\overline{b^*c}\,(d^*a)\tr(da^*cb^*)\bigr]\notag\\
 &=\abs{b^*c}^2+\abs{d^*a}^2
   +2xt\operatorname{Re}\bigl(\overline{b^*c}\,d^*a\bigr)\notag\\
 &=\frac{1}{2}(1+xt)\abs{b^*c+d^*a}^{2}
   +\frac{1}{2}(1-xt)\abs{b^*c-d^*a}^{2}\notag\\
 &=\frac{1}{2}(1+xt)
   \abs{\sqrt{(1+t)(1-x)}\,e_1^*u_1
          +\sqrt{(1-t)(1+x)}\,e_2^*u_2}^{2}\notag\\
 &\quad+\frac{1}{2}(1-xt)
   \abs{\sqrt{(1+t)(1+x)}\,e_1^*u_2
          +\sqrt{(1-t)(1-x)}\,e_2^*u_1}^{2}\notag\\
 &=:\Phi(K),
 \label{eq:quadratic-form}
\end{align}
where the third equality uses $\norm{ad^*}_{\F}=\norm{cb^*}_{\F}=1$ and
$\tr(da^*cb^*)=(a^*c)(b^*d)=-xt$, and the fifth equality follows by
substituting~\eqref{eq:right-vectors} and~\eqref{eq:left-vectors}.

Viewed as a function of the four entries of $K$, $\Phi$ is a sum of
squared absolute values of linear functions with nonnegative coefficients.
Thus $\Phi$ is convex on $M_2(\C)$. As shown in~\cite{RD},
every contraction in $M_2(\C)$ is a convex combination of unitary matrices.
Write $K=\sum_j p_jU_j$ as such a finite combination, where each $U_j$
is unitary, $p_j\ge0$, and $\sum_jp_j=1$. Thus
\[
 \Phi(K)=\Phi\Bigl(\sum_jp_jU_j\Bigr)
 \le\sum_jp_j\Phi(U_j)\le\max_j\Phi(U_j).
\]
Thus a bound for all unitary matrices also holds for every contraction.

Suppose now that $K$ is unitary. Since every row and column has squared
norm one,
\[
 \abs{e_1^*u_1}=\abs{e_2^*u_2},\qquad
 \abs{e_1^*u_2}=\abs{e_2^*u_1},\qquad
 \abs{e_1^*u_1}^2+\abs{e_1^*u_2}^2=1.
\]
Applying the triangle inequality to the two absolute values
in~\eqref{eq:quadratic-form}, we obtain
\begin{align*}
 \Phi(K)
 &\le\frac{1}{2}(1+xt)
 \left[\sqrt{(1+t)(1-x)}\,\abs{e_1^*u_1}
       +\sqrt{(1-t)(1+x)}\,\abs{e_2^*u_2}\right]^2\\
 &\quad+\frac{1}{2}(1-xt)
 \left[\sqrt{(1+t)(1+x)}\,\abs{e_1^*u_2}
       +\sqrt{(1-t)(1-x)}\,\abs{e_2^*u_1}\right]^2\\
 &=\frac{1}{2}(1+xt)
 \left[\sqrt{(1+t)(1-x)}+\sqrt{(1-t)(1+x)}\right]^2\abs{e_1^*u_1}^2\\
 &\quad+\frac{1}{2}(1-xt)
 \left[\sqrt{(1+t)(1+x)}+\sqrt{(1-t)(1-x)}\right]^2\abs{e_1^*u_2}^2\\
 &=(1+xt)\bigl[1-xt+\sqrt{1-x^2}\sqrt{1-t^2}\bigr]\abs{e_1^*u_1}^2
   +(1-xt)\bigl[1+xt+\sqrt{1-x^2}\sqrt{1-t^2}\bigr]\abs{e_1^*u_2}^2\\
 &\le(1+xt)\bigl[1-xt+\sqrt{1-x^2}\sqrt{1-t^2}\bigr]\abs{e_1^*u_1}^2
   +(1+xt)\bigl[1-xt+\sqrt{1-x^2}\sqrt{1-t^2}\bigr]\abs{e_1^*u_2}^2\\
 &=(1+xt)\bigl[1-xt+\sqrt{1-x^2}\sqrt{1-t^2}\bigr]\\
 &=f(x,t).
\end{align*}
By the preceding convexity argument, this proves~\eqref{eq:f-bound}
for every contraction $K$.

For attainment, fix unit vectors $b,d$ and $x\in[0,1]$, with
$t=\abs{b^*d}$. Let $z=\overline{b^*d}/t$ if $t>0$, and $z=1$ if
$t=0$. Then $\abs{z}=1$ and $b^*(zd)=t$. Choose $e_1,e_2$ so that
\eqref{eq:right-vectors} holds with $d$ replaced by $zd$.
Take $u_1=e_1$, $u_2=e_2$, and define $a,c$ by~\eqref{eq:left-vectors}.
The vectors $a$ and $c$ have unit length and satisfy $a^*c=-x$.
Now $K=I_2$,
where $I_2$ is the $2\times2$ identity matrix, so~\eqref{eq:quadratic-form} gives
\begin{align*}
 f(x,t)&=\Phi(I_2)=\norm{ab^*c(zd)^*-c(zd)^*ab^*}_{\F}^{2}\\
 &=\norm{\overline z\,(ab^*cd^*-cd^*ab^*)}_{\F}^{2}
  =\norm{ab^*cd^*-cd^*ab^*}_{\F}^{2}.
\end{align*}

For the equality characterization, assume $0<x,t<1$, $b^*d=t$,
and $a^*c=-x$. For a unitary $K$, equality in the last inequality of
the preceding estimate requires $e_1^*u_2=0$. The equal moduli above
then give $e_2^*u_1=0$. Since $\abs{e_1^*u_1}=\abs{e_2^*u_2}=1$, equality in the triangle
inequality holds exactly when $e_1^*u_1=e_2^*u_2=z$ for some
$z\in\C$ with $\abs{z}=1$.
Thus the unitary maximizers are exactly $zI_2$ with $\abs{z}=1$.

If a contraction $K$ attains the bound, every unitary occurring with
positive coefficient in its convex decomposition must also attain it.
Otherwise convexity would give a strict inequality for $\Phi(K)$.
Each such unitary is a scalar multiple of $I_2$, so $K=zI_2$ with
$\abs{z}\le1$. Since $\Phi(zI_2)=\abs{z}^2f(x,t)$ and $f(x,t)>0$,
equality forces $\abs{z}=1$.
The identities $e_1^*u_1=e_2^*u_2=z$ then give equality in the
Cauchy--Schwarz inequality for each pair of unit vectors. Consequently
$u_1=ze_1$ and $u_2=ze_2$.
Conversely, these relations give $K=zI_2$ and equality
in~\eqref{eq:quadratic-form}.
\end{proof}

For fixed unit vectors $b,d$, we next maximize over $a,c$.
Since the sharp bound depends on $a,c$ only through $x=\abs{a^*c}$,
define
\begin{equation}\label{eq:F-def}
 F(t)=\max_{0\le x\le1}f(x,t),\qquad 0\le t\le1.
\end{equation}
By~\cref{lem:numerator}, $F(t)$ is the largest squared commutator
norm for any fixed unit vectors $b,d$ with $\abs{b^*d}=t$.

\begin{lemma}\label{lem:x-optimum}
The endpoint values are $F(0)=2$ and $F(1)=1$.
For every fixed $t\in(0,1)$, the function $x\mapsto f(x,t)$ has a
unique maximizer on $[0,1]$, denoted by $x_*(t)$. This maximizer lies
in $(0,t)$ and is the unique solution in that interval of
\begin{equation}\label{eq:x-cubic}
 (t-x)(1-t^2)=4tx^2(1+xt).
\end{equation}
Moreover, the function $t\mapsto x_*(t)$ is smooth on $(0,1)$.
\end{lemma}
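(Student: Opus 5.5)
The plan is to treat $t$ as a fixed parameter and study $g(x)=f(x,t)$ by elementary one-variable calculus. We write $s=\sqrt{1-t^2}$ and $r(x)=\sqrt{1-x^2}$, so that $g(x)=(1+xt)(1-xt+s\,r(x))=1-x^2t^2+s(1+xt)r(x)$.

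The endpoint values are direct substitutions. At $t=0$ we get $g(x)=1+\sqrt{1-x^2}$, maximized at $x=0$ with value $2$. At $t=1$ we get $g(x)=1-x^2$, maximized at $x=0$ with value $1$. Hence $F(0)=2$ and $F(1)=1$.

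Now fix $t\in(0,1)$. Differentiating,
\[
 g'(x)=-2xt^2+st\,r(x)-\frac{s(1+xt)x}{r(x)}.
\]
From this we read off $g'(0)=st>0$, while $g'(x)\to-\infty$ as $x\to1^-$. So every maximizer lies in the open interval $(0,1)$ and satisfies $g'(x)=0$. Multiplying by $r(x)$, the critical point equation becomes
\[
 s\bigl(t(1-x^2)-x(1+xt)\bigr)=2xt^2 r(x),\qquad\text{i.e.}\qquad s\,(t-x-2tx^2)=2xt^2 r(x).
\]
The right-hand side is positive on $(0,1)$, so any critical point satisfies $t-x-2tx^2>0$, and in particular $x<t$. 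Squaring gives
\[
 (1-t^2)(t-x-2tx^2)^2=4x^2t^4(1-x^2),
\]
and this should factor so as to contain the cubic \eqref{eq:x-cubic}. Rewriting \eqref{eq:x-cubic} as $(t-x)(1-t^2)-4tx^2(1+xt)=0$, I expect the squared equation to equal that cubic times a second factor that keeps a fixed sign on the relevant region. Verifying this factorization is the main computational obstacle. If it fails, I would instead substitute $x=\sin\alpha$ and $t=\sin\beta$, hoping the critical-point equation collapses to a trigonometric identity equivalent to the cubic. To avoid any spurious roots introduced by squaring, I would restrict throughout to $0<x<t$ with $t-x-2tx^2>0$.

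Uniqueness comes from the cubic itself. Let $h(x)=(t-x)(1-t^2)-4tx^2(1+xt)$. It is strictly decreasing on $[0,t]$, since both pieces decrease in $x$, and it satisfies $h(0)=t(1-t^2)>0$ and $h(t)=-4t^3(1+t^2)<0$. So $h$ has exactly one root $x_*(t)\in(0,t)$. Because every critical point of $g$ is a root of $h$ in $(0,t)$, $g$ has exactly one critical point in $(0,1)$. Combined with $g'(0)>0$, this critical point is the unique maximizer.

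Smoothness follows from the implicit function theorem applied to $h(x,t)=0$. Here $\partial_x h=-(1-t^2)-8tx-12t^2x^2<0$ everywhere on the region, and $h$ is polynomial, so $t\mapsto x_*(t)$ is smooth (indeed real-analytic) on $(0,1)$.
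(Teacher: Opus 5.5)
Your outline follows the paper's proof closely. Both use the same endpoint computation, the same critical-point equation $\sqrt{1-t^2}\,(t-x-2tx^2)=2xt^2\sqrt{1-x^2}$, squaring to reach the cubic, strict monotonicity of the cubic on $(0,t)$, and the implicit function theorem. The one difference is how uniqueness of the maximizer is obtained. The paper shows $f_{xx}<0$, so $f_x(\cdot,t)$ has a single zero, located in $(0,t)$ by $f_x(0,t)>0>f_x(t,t)$. It uses the squared equation only to identify that zero with a root of the cubic. You skip concavity and instead use the uniqueness of the cubic's root in $(0,t)$ to bound the number of critical points. That is valid and saves the second-derivative computation.

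However, it makes your whole argument rest on the factorization you left as an expectation. Without it, the claim that every critical point is a root of the cubic is unproved. Both the cubic characterization and the uniqueness of the maximizer then fail to follow. The factorization does hold, and the second factor is $x-t$:
\[
 (1-t^2)(t-x-2tx^2)^2-4t^4x^2(1-x^2)
 =(x-t)\bigl[4tx^2(1+xt)-(t-x)(1-t^2)\bigr].
\]
You can check this by comparing coefficients. Both sides equal
\[
 t^2(1-t^2)-2t(1-t^2)x+(1-5t^2)x^2+4t(1-t^2)x^3+4t^2x^4.
\]
You already showed that every critical point satisfies $x<t$, so the factor $x-t$ is nonzero there and the cubic follows. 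No trigonometric fallback is needed.

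With this identity inserted, your proof is complete. The step from ``at most one critical point'' to ``exactly one'' uses the existence of an interior maximizer, which you established from $g'(0)>0$ and $g'(x)\to-\infty$ as $x\to1^-$.
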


\begin{proof}
Since $f$ is continuous on $[0,1]^2$, its maximum over the fixed
interval $0\le x\le1$ is continuous in $t$.
At the endpoints,
\[
 f(x,0)=1+\sqrt{1-x^2},\qquad f(x,1)=1-x^2,
\]
so both maxima occur at $x=0$ and have the stated values.

Fix $t\in(0,1)$. For $0\le x<1$, differentiation gives
\begin{align}
 f_x(x,t)
 &=-2xt^2+\frac{\sqrt{1-t^2}\,(t-x-2tx^2)}{\sqrt{1-x^2}},
 \label{eq:fx}\\
 f_{xx}(x,t)
 &=-2t^2-\frac{\sqrt{1-t^2}\,\bigl[1+tx(3-2x^2)\bigr]}{(1-x^2)^{3/2}}<0.
 \notag
\end{align}
Thus $f_x(\cdot,t)$ is strictly decreasing. Since
\[
 f_x(0,t)=t\sqrt{1-t^2}>0,\qquad f_x(t,t)=-4t^3<0,
\]
it has exactly one zero in $(0,t)$. Denote this zero by $x_*(t)$.
The derivative is positive for $0\le x<x_*(t)$ and negative for
$x_*(t)<x<1$. Hence $x_*(t)$ is the unique maximizer on $[0,1]$,
and $0<x_*(t)<t$.

At this point, $f_x(x_*(t),t)=0$, so~\eqref{eq:fx} gives
\[
 \sqrt{1-t^2}\,\bigl[t-x_*(t)-2t x_*(t)^2\bigr]
 =2t^2x_*(t)\sqrt{1-x_*(t)^2}.
\]
Squaring and factoring, we obtain
\begin{align*}
 0&=(1-t^2)\bigl[t-x_*(t)-2t x_*(t)^2\bigr]^2
       -4t^4x_*(t)^2\bigl[1-x_*(t)^2\bigr]\\
  &=\bigl[x_*(t)-t\bigr]\Bigl(4t x_*(t)^2\bigl[1+t x_*(t)\bigr]
       -\bigl[t-x_*(t)\bigr](1-t^2)\Bigr).
\end{align*}
Since $x_*(t)<t$ by the preceding derivative test, the factor $x_*(t)-t$
is nonzero. Therefore $x_*(t)$ satisfies~\eqref{eq:x-cubic}.
To exclude additional solutions in $(0,t)$, differentiate the
right-hand side minus the left-hand side of~\eqref{eq:x-cubic}:
\[
 \frac{\partial}{\partial x}
 \bigl[4tx^2(1+xt)-(t-x)(1-t^2)\bigr]
 =12t^2x^2+8tx+1-t^2>0,\qquad 0<x<t.
\]
This difference is strictly increasing, so $x_*(t)$ is its only zero
in $(0,t)$. Its $x$-derivative is nonzero at this root. The implicit
function theorem therefore shows that $t\mapsto x_*(t)$ is smooth
on $(0,1)$.
\end{proof}

\section{The product of two Rayleigh quotients}\label{sec:weight}

For all unit vectors $a,b\in\C^n$,
\[
 \norm{ab^*}_A^2=\tr\bigl(Aba^*ab^*\bigr)=\tr(Abb^*)=b^*Ab.
\]
Thus, for $X=ab^*$ and $Y=cd^*$ with unit vectors $a,b,c,d$, the
denominator in~\eqref{eq:C-def} is $(b^*Ab)(d^*Ad)$ and is independent
of $a,c$. For each $t\in[0,1]$, define
\[
 D(t)=\min_{\substack{b,d\in\C^n\\\norm{b}=\norm{d}=1\\\abs{b^*d}=t}}
          (b^*Ab)(d^*Ad).
\]
The constraints define a nonempty compact set, so the minimum is attained.

\begin{lemma}\label{lem:Rayleigh}
For every $t\in[0,1]$,
\begin{empheq}[left={D(t)=\empheqlbrace}]{alignat=2}
 &\lambda_1\lambda_2(1-t^2),\qquad
       &&\textup{if }0\le t\le\frac{\lambda_2-\lambda_1}{\lambda_2+\lambda_1},\label{eq:D-first}\\[5pt]
 &\frac{1}{4}\bigl[\lambda_1(1+t)+\lambda_2(1-t)\bigr]^2,\qquad
       &&\textup{if }\frac{\lambda_2-\lambda_1}{\lambda_2+\lambda_1}<t\le1.\label{eq:D-second}
\end{empheq}
Each minimum is attained by vectors in a plane spanned by orthonormal
eigenvectors of $A$ corresponding to the eigenvalues $\lambda_1$ and $\lambda_2$.

If $\lambda_1<\lambda_2$ and ${(\lambda_2-\lambda_1)/(\lambda_2+\lambda_1)}<t<1$, unit vectors $b,d\in\C^n$
with $b^*d=t$ attain $D(t)$ if and only if they have the
form~\eqref{eq:right-vectors} for orthonormal eigenvectors $e_1,e_2$
satisfying $Ae_1=\lambda_1e_1$ and $Ae_2=\lambda_2e_2$.
\end{lemma}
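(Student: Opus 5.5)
The plan is to reduce everything to a two-dimensional problem and then to a one-parameter minimization.

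\textbf{Reduction to two dimensions.} Fix unit vectors $b,d$. Multiplying $d$ by a phase changes neither the objective nor $\abs{b^*d}$, so we may assume $b^*d=t$. Define $e_1,e_2$ as in~\eqref{eq:right-basis}, with the same convention at $t=1$. Then $b,d$ take the form~\eqref{eq:right-vectors}.

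Set $\alpha=e_1^*Ae_1$, $\gamma=e_2^*Ae_2$ and $\beta=e_1^*Ae_2$. Then
\[
 b^*Ab,\ d^*Ad=\tfrac{1+t}{2}\alpha+\tfrac{1-t}{2}\gamma\pm\sqrt{1-t^2}\,\operatorname{Re}\beta .
\]
Writing $p=\tfrac{1+t}{2}\alpha+\tfrac{1-t}{2}\gamma$ and $q=\sqrt{1-t^2}\operatorname{Re}\beta$, the product is $p^2-q^2$.

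The compression $B$ of $A$ to $\spanof\{e_1,e_2\}$ is a $2\times2$ positive matrix. By Cauchy interlacing its eigenvalues $\mu_1\le\mu_2$ satisfy $\mu_1\ge\lambda_1$ and $\mu_2\ge\lambda_2$. It therefore suffices to prove the following. For a $2\times2$ positive matrix $B=\begin{pmatrix}\alpha&\beta\\\bar\beta&\gamma\end{pmatrix}$ with eigenvalues $\mu_1\le\mu_2$ and orthonormal basis $e_1,e_2$,
\[
 p^2-q^2\ \ge\ \Psi_t(\mu_1,\mu_2),
\]
where $\Psi_t$ denotes the right-hand side of~\eqref{eq:D-first}--\eqref{eq:D-second} with $\lambda_i$ replaced by $\mu_i$. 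We also need $\Psi_t$ to be nondecreasing in each $\mu_i$ on the relevant region. That monotonicity is clear in the second regime. In the first regime, raising $\mu_1$ can move $t$ across the threshold, but the two formulas agree at the threshold and both are increasing, so monotonicity still holds.

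\textbf{Two-dimensional minimization.} Parametrize $B$ by its eigenvalues and a unit eigenvector. Writing $\alpha=\mu_1\cos^2\theta+\mu_2\sin^2\theta$ and $\gamma=\mu_1\sin^2\theta+\mu_2\cos^2\theta$, we have $\abs{\beta}=(\mu_2-\mu_1)\abs{\sin\theta\cos\theta}$. For fixed $\theta$ the product $p^2-q^2$ is minimized by taking $\beta$ real, so that $q$ is maximal.

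Put $s=\sin^2\theta\in[0,1]$ and $\delta=\mu_2-\mu_1$. Then
\[
 p=\tfrac{1+t}{2}\mu_1+\tfrac{1-t}{2}\mu_2+t\delta s,\qquad q^2=(1-t^2)\delta^2 s(1-s),
\]
and $p^2-q^2$ is a quadratic in $s$ with leading coefficient $\delta^2(t^2+1-t^2)=\delta^2\ge0$, hence convex. We minimize it over $[0,1]$. At $s=0$ it equals $\tfrac14[\mu_1(1+t)+\mu_2(1-t)]^2$, which is the value in~\eqref{eq:D-second}.

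The derivative at $s=0$ is $2t\delta p_0-(1-t^2)\delta^2$, where $p_0=\tfrac12[\mu_1(1+t)+\mu_2(1-t)]$. This is $\ge0$ exactly when $t\mu_1(1+t)+t\mu_2(1-t)\ge(1-t^2)(\mu_2-\mu_1)$, which simplifies to $\mu_1(1+t)\ge\mu_2(1-t)$, i.e.\ $t\ge(\mu_2-\mu_1)/(\mu_2+\mu_1)$. In that range $s=0$ is optimal. Otherwise the interior vertex value is computed by completing the square. I expect it to give $\mu_1\mu_2(1-t^2)$, which can be checked using the general identity $\min(b^*Bb)(d^*Bd)\ge\det B\,(1-\abs{b^*d}^2)$. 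That identity is the Cauchy--Binet/Gram bound: $(b^*Bb)(d^*Bd)-\abs{b^*Bd}^2=\det B\,\abs{\det[b\ d]}^2$. This identity also gives the lower bound $\lambda_1\lambda_2(1-t^2)$ directly in the first regime, and attainment there follows from the explicit $s$.

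\textbf{Equality characterization.} Assume $\lambda_1<\lambda_2$ and $t$ lies strictly above the threshold. Equality needs three things.
\begin{itemize}
\item[(i)] Equality in the monotonicity step, which forces $\mu_1=\lambda_1$ and $\mu_2=\lambda_2$. This uses that $\Psi_t$ is strictly increasing in each $\mu_i$ there, and it is, since both coefficients $1\pm t$ are positive when $t<1$.
\item[(ii)] The unique minimizer $s=0$. Uniqueness follows because the derivative at $0$ is strictly positive and the quadratic is convex.
\item[(iii)] $\beta$ real, which holds automatically since $\beta=0$ at $s=0$.
\end{itemize}
Thus $e_1,e_2$ are eigenvectors of $B$ for $\lambda_1,\lambda_2$. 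Because the compression has extreme eigenvalues $\lambda_1,\lambda_2$, the equality case of interlacing (minimum of the Rayleigh quotient) shows $Ae_1=\lambda_1e_1$. Then $e_2\perp e_1$ with $e_2^*Ae_2=\lambda_2$, where $\lambda_2$ is the minimum of the Rayleigh quotient on $e_1^\perp$, so $Ae_2=\lambda_2e_2$. The converse is a direct substitution.

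The main obstacle is keeping the reduction honest: monotonicity of $\Psi_t$ across the regime change, and the interlacing equality cases. The $2\times2$ quadratic computation itself is routine.
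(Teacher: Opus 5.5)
Your proposal is correct and follows essentially the paper's route. Both arguments compress $A$ to $\spanof\{b,d\}$, use interlacing to get $\mu_1\ge\lambda_1$ and $\mu_2\ge\lambda_2$, parametrize by the diagonal entry in the $e_1,e_2$ basis (your $s$ plays the role of the paper's $h_{11}$), minimize the resulting convex quadratic, and settle the equality case through Rayleigh-quotient equality. The one variation is how you pass from $\mu_i$ to $\lambda_i$: the paper uses the operator inequality $B\succeq H$, where $H$ shares the eigenvectors of $B$, whereas you compute $\Psi_t(\mu_1,\mu_2)$ first and then use its monotonicity. That monotonicity is valid and strict for $t<1$, since with $u=(1+t)\mu_1$ and $w=(1-t)\mu_2$ the function equals $uw$ for $u\le w$ and $(u+w)^2/4$ for $u\ge w$. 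Your attainment step, choosing $e_1,e_2$ as a real rotation of eigenvectors for $\lambda_1,\lambda_2$, is left implicit but is routine.
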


\begin{proof}
If $\lambda_1=\lambda_2$, then the eigenspace associated
with $\lambda_1$ has dimension at least two. Every Rayleigh quotient is at least
$\lambda_1$, and choosing $b,d$ in this eigenspace with
$\abs{b^*d}=t$ gives $D(t)=\lambda_1^2$ for every $t\in[0,1]$.
If $t=1$, equality in the Cauchy--Schwarz inequality gives
$d=(b^*d)b$ with $\abs{b^*d}=1$. Hence
\[
 (b^*Ab)(d^*Ad)=\abs{b^*d}^2(b^*Ab)^2=(b^*Ab)^2\ge\lambda_1^2.
\]
Taking $b=d$ to be a unit eigenvector of $A$ corresponding to the eigenvalue
$\lambda_1$ attains this value. These observations prove the formulas whenever
$\lambda_1=\lambda_2$ or $t=1$.

Assume now that $\lambda_1<\lambda_2$ and $0\le t<1$.
Multiplying $d$ by a scalar of modulus one preserves $\abs{b^*d}$
and both Rayleigh quotients, so we may assume $b^*d=t$.
Since $t<1$, define $e_1,e_2$ as in~\eqref{eq:right-basis}.
The construction in the proof of~\cref{lem:numerator} shows that they
form an orthonormal basis of $\spanof\{b,d\}$ and satisfy~\eqref{eq:right-vectors}.

Define an operator $B$ on this plane by
\[
 Bv=(e_1^*Av)e_1+(e_2^*Av)e_2,\qquad v\in\spanof\{b,d\}.
\]
Thus $Bv$ is the orthogonal projection of $Av$ onto the plane $\spanof\{b,d\}$.

\newpage
Since $e_1,e_2$ form an orthonormal basis of
$\spanof\{b,d\}=\spanof\{e_1,e_2\}$, every $v\in\spanof\{b,d\}$ satisfies
$v=(e_1^*v)e_1+(e_2^*v)e_2$. Therefore,
\[
 v^*Bv=(v^*e_1)(e_1^*Av)+(v^*e_2)(e_2^*Av)
 =\bigl[(e_1^*v)e_1+(e_2^*v)e_2\bigr]^*Av=v^*Av.
\]
Since $A$ is positive definite, $v^*Bv=v^*Av>0$ for every nonzero
$v\in\spanof\{b,d\}$, so $B$ is positive definite.
Because $A=A^*$, we also have
$e_i^*Be_j=e_i^*Ae_j=\overline{e_j^*Ae_i}=\overline{e_j^*Be_i}$
for $i,j\in\{1,2\}$. Thus $B$ is also Hermitian.

Let $\mu_1\le\mu_2$ be the eigenvalues of $B$. The min--max principle gives
\begin{align*}
 \mu_1
 &=\min_{\substack{v\in\spanof\{b,d\}\\\norm{v}=1}}v^*Av
 \ge\min_{\substack{v\in\C^n\\\norm{v}=1}}v^*Av=\lambda_1,\\
 \mu_2
 &=\max_{\substack{v\in\spanof\{b,d\}\\\norm{v}=1}}v^*Av
 \ge\min_{\substack{V\subseteq\C^n\\\dim V=2}}\;
       \max_{\substack{v\in V\\\norm{v}=1}}v^*Av=\lambda_2.
\end{align*}
Choose orthonormal eigenvectors $q_1,q_2$ of $B$ in the plane, with
$Bq_i=\mu_iq_i$ for $i\in\{1,2\}$. Then, for every $v\in\spanof\{b,d\}$,
\[
 Bv=\mu_1(q_1^*v)q_1+\mu_2(q_2^*v)q_2.
\]
We define an operator $H$ on $\spanof\{b,d\}$ such that for every
$v\in\spanof\{b,d\}$,
\[
 Hv=\lambda_1(q_1^*v)q_1+\lambda_2(q_2^*v)q_2.
\]
The operator $H$ is positive definite Hermitian with eigenvalues
$\lambda_1,\lambda_2$. Moreover,
\[
 v^*(B-H)v=(\mu_1-\lambda_1)\abs{q_1^*v}^2
             +(\mu_2-\lambda_2)\abs{q_2^*v}^2\ge0.
\]
Thus $b^*Bb\ge b^*Hb>0$ and $d^*Bd\ge d^*Hd>0$, which give
\begin{equation}\label{eq:Rayleigh-comparison}
 (b^*Ab)(d^*Ad)=(b^*Bb)(d^*Bd)\ge(b^*Hb)(d^*Hd).
\end{equation}
Since $e_1,e_2$ form another orthonormal basis of the plane, the matrix
of $H$ in this basis has entries $h_{ij}=e_i^*He_j$ for $i,j\in\{1,2\}$.
As $H$ is Hermitian with eigenvalues $\lambda_1,\lambda_2$, its matrix
takes the form
\[
 \begin{pmatrix}
 h_{11}&h_{12}\\
 \overline{h_{12}}&\lambda_1+\lambda_2-h_{11}
 \end{pmatrix},
 \qquad \lambda_1\le h_{11}\le\lambda_2.
\]
Here the lower-right entry follows from $\tr H=\lambda_1+\lambda_2$,
while the bounds on $h_{11}$ follow from the Rayleigh quotient
characterization. Since $\det H=\lambda_1\lambda_2$, we also have
\[
 \abs{h_{12}}^2=h_{11}(\lambda_1+\lambda_2-h_{11})-\lambda_1\lambda_2
 =(h_{11}-\lambda_1)(\lambda_2-h_{11}).
\]
Using the expressions for $b,d$ in~\eqref{eq:right-vectors}, we obtain
\begin{align*}
 b^*Hb
 &=\frac{1}{2}(1+t)h_{11}
   +\frac{1}{2}(1-t)(\lambda_1+\lambda_2-h_{11})
   +\sqrt{1-t^2}\operatorname{Re}h_{12}\\
 &=th_{11}+\frac{1}{2}(\lambda_1+\lambda_2)(1-t)
   +\sqrt{1-t^2}\operatorname{Re}h_{12},\\
 d^*Hd
 &=th_{11}+\frac{1}{2}(\lambda_1+\lambda_2)(1-t)
   -\sqrt{1-t^2}\operatorname{Re}h_{12}.
\end{align*}
Multiplying the two Rayleigh quotients and completing the square gives
\begin{align}
 (b^*Hb)(d^*Hd)
 &=\left[th_{11}+\frac{1}{2}(\lambda_1+\lambda_2)(1-t)\right]^2
       -(1-t^2)(\operatorname{Re}h_{12})^2\notag\\
 &\ge\left[th_{11}+\frac{1}{2}(\lambda_1+\lambda_2)(1-t)\right]^2
       -(1-t^2)(h_{11}-\lambda_1)(\lambda_2-h_{11})\notag\\
 &=h_{11}^2-(\lambda_1+\lambda_2)(1-t)h_{11}
   +\frac{1}{4}(\lambda_1+\lambda_2)^2(1-t)^2+\lambda_1\lambda_2(1-t^2)\notag\\
 &=\left[h_{11}-\frac{1}{2}(\lambda_1+\lambda_2)(1-t)\right]^2
       +\lambda_1\lambda_2(1-t^2).
 \label{eq:Rayleigh-square}
\end{align}\nopagebreak[4]
Because $0\le t<1$, equality in~\eqref{eq:Rayleigh-square} holds exactly when $h_{12}$ is real.

\begingroup
\emergencystretch=1em
The square in~\eqref{eq:Rayleigh-square} vanishes at
$h_{11}=\frac{1}{2}(\lambda_1+\lambda_2)(1-t)$.
For $0\le t\le{(\lambda_2-\lambda_1)/(\lambda_2+\lambda_1)}$, the minimum of the right-hand side of~\eqref{eq:Rayleigh-square}
over $h_{11}\in[\lambda_1,\lambda_2]$ is $\lambda_1\lambda_2(1-t^2)$.
For ${(\lambda_2-\lambda_1)/(\lambda_2+\lambda_1)}<t<1$, the value at which the square vanishes is less than
$\lambda_1$. Thus its minimum on $[\lambda_1,\lambda_2]$ occurs at
$h_{11}=\lambda_1$, and~\eqref{eq:Rayleigh-square} gives
\begin{align}
 (b^*Hb)(d^*Hd)
 &\ge\left[\lambda_1-\frac{1}{2}(\lambda_1+\lambda_2)(1-t)\right]^2
    +\lambda_1\lambda_2(1-t^2)\notag\\
 &=\frac{1}{4}\bigl[\lambda_1(1+t)+\lambda_2(1-t)\bigr]^2.
 \label{eq:Rayleigh-minimum}
\end{align}
\endgroup

To attain these lower bounds for $A$, let
\[
 m=\max\left\{\lambda_1,\frac{1}{2}(\lambda_1+\lambda_2)(1-t)\right\}.
\]
Since $\frac{1}{2}(\lambda_1+\lambda_2)(1-t)\le\lambda_2$, the value
$m\in[\lambda_1,\lambda_2]$ minimizes the square in~\eqref{eq:Rayleigh-square}.
Let $H_0$ be the Hermitian
operator on $\spanof\{b,d\}$ whose matrix in the basis $e_1,e_2$ is
\[
 \begin{pmatrix}
  m&\sqrt{(m-\lambda_1)(\lambda_2-m)}\\
  \sqrt{(m-\lambda_1)(\lambda_2-m)}&\lambda_1+\lambda_2-m
 \end{pmatrix}.
\]
This matrix has trace $\lambda_1+\lambda_2$ and determinant
$\lambda_1\lambda_2$, so $H_0$ has eigenvalues $\lambda_1,\lambda_2$.
Because the off-diagonal entries are real and the upper-left entry
minimizes the square in~\eqref{eq:Rayleigh-square},
$(b^*H_0b)(d^*H_0d)$ attains the lower bound.
Choose an orthonormal eigenbasis $r_1,r_2$ of $H_0$ on
$\spanof\{b,d\}$ and orthonormal eigenvectors $p_1,p_2$ of $A$
such that $H_0r_i=\lambda_i r_i$ and $Ap_i=\lambda_i p_i$ for $i\in\{1,2\}$.
Define a linear map $U:\spanof\{b,d\}\to\C^n$ such that for every
$v\in\spanof\{b,d\}$,
\[
 Uv=(r_1^*v)p_1+(r_2^*v)p_2.
\]
Thus $Ur_1=p_1$ and $Ur_2=p_2$.
Furthermore, for all $v,w\in\spanof\{b,d\}$,
\begin{align*}
 (Uv)^*(Uw)&=(v^*r_1)(r_1^*w)+(v^*r_2)(r_2^*w)=v^*w,\\
 (Uv)^*A(Uv)&=\lambda_1\abs{r_1^*v}^2+\lambda_2\abs{r_2^*v}^2=v^*H_0v.
\end{align*}
Consequently, $\norm{Ub}=\norm{Ud}=1$, $(Ub)^*(Ud)=b^*d=t$, and
\[
 \bigl[(Ub)^*A(Ub)\bigr]\bigl[(Ud)^*A(Ud)\bigr]=(b^*H_0b)(d^*H_0d).
\]
The vectors $Ub,Ud\in\spanof\{p_1,p_2\}$ therefore attain the minimum
for $A$. Together with the cases treated at the start, this proves~\eqref{eq:D-first} and~\eqref{eq:D-second}.

Now suppose $\lambda_1<\lambda_2$ and ${(\lambda_2-\lambda_1)/(\lambda_2+\lambda_1)}<t<1$, and let
unit vectors $b,d$ satisfy $b^*d=t$ and attain $D(t)$, so
$(b^*Ab)(d^*Ad)=D(t)$.
Then
\[
 D(t)=(b^*Ab)(d^*Ad)=(b^*Bb)(d^*Bd)
 \ge(b^*Hb)(d^*Hd)
 \ge\frac{1}{4}\bigl[\lambda_1(1+t)+\lambda_2(1-t)\bigr]^2=D(t),
\]
\noindent where the second equality and the first inequality follow
from~\eqref{eq:Rayleigh-comparison}, the last inequality uses the
minimum value in~\eqref{eq:Rayleigh-minimum}, and the last equality
follows from~\eqref{eq:D-second}.
Thus both inequalities are equalities.
The minimum of the square in~\eqref{eq:Rayleigh-square} is unique on
$[\lambda_1,\lambda_2]$, so $h_{11}=\lambda_1$ and $h_{12}=0$.
Consequently $He_1=\lambda_1e_1$ and $He_2=\lambda_2e_2$.
Moreover, $b^*Bb\ge b^*Hb>0$ and $d^*Bd\ge d^*Hd>0$.
Equality of the products therefore requires equality of each factor.
Since $B-H$ is positive semidefinite,
\[
 \norm{(B-H)^{\frac{1}{2}}b}^2=b^*(B-H)b=0,
 \qquad
 \norm{(B-H)^{\frac{1}{2}}d}^2=d^*(B-H)d=0.
\]
Hence $(B-H)b=(B-H)d=0$.
The vectors $b,d$ span the plane because $t<1$, so $B=H$ and
\[
 e_1^*Ae_1=e_1^*Be_1=\lambda_1,
 \qquad e_2^*Ae_2=e_2^*Be_2=\lambda_2.
\]
The first equality attains the minimum Rayleigh quotient of $A$, so
$Ae_1=\lambda_1e_1$.
Since $e_2\perp e_1$ and $e_2^*Ae_2=\lambda_2$, the equality case of
the Courant--Fischer theorem gives $Ae_2=\lambda_2e_2$.

Conversely, these eigenvector relations and~\eqref{eq:right-vectors} give
\[
 b^*Ab=d^*Ad=\frac{1}{2}\bigl[\lambda_1(1+t)+\lambda_2(1-t)\bigr],
 \qquad
 (b^*Ab)(d^*Ad)=D(t).
\]
This proves the equality characterization.
\end{proof}

Lemma~\ref{lem:Rayleigh} chooses the vectors $b,d$ to minimize
the denominator at a fixed $t$. Lemma~\ref{lem:numerator} applies to
$b,d$ and chooses $a,c$ to attain the numerator bound.
The next proposition combines these two choices.

\begin{proposition}\label{prop:reduction}
The sharp constant in~\eqref{eq:C-def} is
\begin{equation}\label{eq:scalar-reduction}
 \CalC_A=\max_{0\le t\le1}\frac{F(t)}{D(t)}.
\end{equation}
The maximum of $F(t)/D(t)$ over $t\in[0,1]$ is attained
by rank-one matrices whose row and column spaces
lie in a plane spanned by
eigenvectors of $A$ corresponding to the eigenvalues $\lambda_1$ and $\lambda_2$.
\end{proposition}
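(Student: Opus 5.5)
The plan is to prove two inequalities between $\CalC_A$ and $\sup_t F(t)/D(t)$, and to check along the way that the maximum is actually attained.

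\emph{Upper bound.} Take any rank-one pair $X,Y$. Both sides of the ratio in~\eqref{eq:C-def} are homogeneous of degree four in $(X,Y)$ jointly: the numerator scales as $\abs{\alpha}^2\abs{\beta}^2$ under $X\mapsto\alpha X$, $Y\mapsto\beta Y$, and so does the denominator. So we may normalize and write $X=ab^*$ and $Y=cd^*$ with unit vectors $a,b,c,d$. Set $x=\abs{a^*c}$ and $t=\abs{b^*d}$.
\begin{itemize}
\item By~\cref{lem:numerator}, the numerator is at most $f(x,t)\le F(t)$.
\item By the computation opening~\cref{sec:weight}, the denominator equals $(b^*Ab)(d^*Ad)\ge D(t)>0$.
\end{itemize}
Hence the ratio is at most $F(t)/D(t)$, and so $\CalC_A\le\sup_{t\in[0,1]}F(t)/D(t)$.

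\emph{The supremum is a maximum.} We need $F$ and $D$ continuous and $D>0$ on $[0,1]$.
\begin{itemize}
\item $F$ is continuous by the first line of the proof of~\cref{lem:x-optimum}.
\item $D$ is continuous by the explicit formulas of~\cref{lem:Rayleigh}. The two pieces agree at $t=(\lambda_2-\lambda_1)/(\lambda_2+\lambda_1)$: at that point $\lambda_1(1+t)+\lambda_2(1-t)=2\lambda_1\lambda_2\cdot 2/(\lambda_1+\lambda_2)$, and one checks that $\tfrac14$ of its square equals $\lambda_1\lambda_2(1-t^2)$. This is a routine verification.
\item $D>0$ because both formulas are positive: the first since $t<1$ there, the second obviously.
\end{itemize}
Therefore some $t_*\in[0,1]$ attains the supremum.

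\emph{Lower bound and attainment.} Fix this $t_*$.
\begin{enumerate}
\item By~\cref{lem:Rayleigh}, choose unit vectors $b,d$ in the plane $\spanof\{p_1,p_2\}$ spanned by orthonormal eigenvectors of $A$ for $\lambda_1,\lambda_2$, with $\abs{b^*d}=t_*$ and $(b^*Ab)(d^*Ad)=D(t_*)$.
\item Let $x$ be a maximizer of $f(\cdot,t_*)$ on $[0,1]$, so $f(x,t_*)=F(t_*)$.
\item The attainment part of~\cref{lem:numerator} gives unit vectors $a,c$ with $\abs{a^*c}=x$ whose commutator attains $f(x,t_*)$.
\end{enumerate}
In that construction $a$ and $c$ are combinations of $e_1,e_2$, and these are built from $b$ and a unimodular multiple of $d$, so they lie in $\spanof\{b,d\}\subseteq\spanof\{p_1,p_2\}$. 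Thus $X=ab^*$ and $Y=cd^*$ have row and column spaces in that eigenplane, and their ratio equals $F(t_*)/D(t_*)$. This gives $\CalC_A\ge\max_t F(t)/D(t)$, proves~\eqref{eq:scalar-reduction}, and shows that the maximizers lie in the eigenplane as claimed.

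\emph{Main obstacle.} There is no hard step, since the work is done in the two lemmas. The point needing care is that numerator and denominator are optimized independently yet simultaneously. The numerator bound depends on $b,d$ only through $t$, and the denominator does not depend on $a,c$ at all. So the attaining $a,c$ from~\cref{lem:numerator} can be constructed for the specific minimizing $b,d$ from~\cref{lem:Rayleigh}; this is exactly why the attainment clause of~\cref{lem:numerator} is stated for every pair $b,d$. I would also check the degenerate case $t_*=1$, where $e_2$ is an arbitrary unit vector orthogonal to $e_1$. I would choose it inside the eigenplane, which is possible since $e_1=b$ lies there.
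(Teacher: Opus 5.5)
Your proof is correct and follows the paper's argument. You bound the ratio by $F(t)/D(t)$ using \cref{lem:numerator} and the definition of $D$, then realize $F(t)/D(t)$ with the minimizing $b,d$ from \cref{lem:Rayleigh} and the attaining $a,c$ built in the proof of \cref{lem:numerator}, choosing $e_2$ inside the eigenplane when $t=1$. The differences are cosmetic: you get the maximizing $t_*$ from continuity of $F$ and $D$ (your check that the two formulas for $D$ agree at the threshold is correct), and you construct an attaining pair only at $t_*$, whereas the paper uses compactness of the product of unit spheres and shows that every value $F(t)/D(t)$ is attained.
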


\begin{proof}
The quotient in~\eqref{eq:C-def} is unchanged when either matrix is
multiplied by a nonzero scalar. We may therefore normalize
$\norm{X}_{\F}=\norm{Y}_{\F}=1$ and write $X=ab^*$, $Y=cd^*$ with
unit vectors $a,b,c,d\in\C^n$. The product of the four unit spheres is
compact, and the denominator is at least $\lambda_1^2>0$ there.
Thus the quotient is continuous on a compact set and attains its maximum.
For $t=\abs{b^*d}$,
\[
 \frac{\norm{XY-YX}_{\F}^{2}}{\norm{X}_A^2\norm{Y}_A^2}
 =\frac{\norm{ab^*cd^*-cd^*ab^*}_{\F}^{2}}{(b^*Ab)(d^*Ad)}
 \le\frac{F(t)}{(b^*Ab)(d^*Ad)}\le\frac{F(t)}{D(t)},
\]
\noindent where the inequalities use~\cref{lem:numerator} and the
definitions of $F,D$.
Taking the maximum gives the upper bound in~\eqref{eq:scalar-reduction}.

For the reverse bound, fix $t\in[0,1]$.
By~\cref{lem:Rayleigh}, choose orthonormal eigenvectors $p_1,p_2$ of $A$,
with $Ap_i=\lambda_i p_i$ for $i\in\{1,2\}$, and unit vectors
$b,d\in\spanof\{p_1,p_2\}$ such that
$\abs{b^*d}=t$ and $(b^*Ab)(d^*Ad)=D(t)$.
Multiplying $d$ by a scalar of modulus one if necessary, we may assume
$b^*d=t$.
Choose $x\in[0,1]$ attaining $F(t)$.
If $0\le t<1$, the linearly independent vectors $b,d$ span the
two-dimensional space $\spanof\{p_1,p_2\}$.
The orthonormal vectors $e_1,e_2$ defined by~\eqref{eq:right-basis}
therefore satisfy
\[
 \spanof\{e_1,e_2\}=\spanof\{b,d\}=\spanof\{p_1,p_2\}.
\]
If $t=1$, then $d=b$. In this case, let $e_1=b$ and choose a unit vector
$e_2\in\spanof\{p_1,p_2\}$ with $e_2\perp e_1$.
In either case, let $u_1=e_1$ and $u_2=e_2$, and define $a,c$
by~\eqref{eq:left-vectors}.
By the construction in the proof of~\cref{lem:numerator},
$a,c$ are unit vectors in $\spanof\{p_1,p_2\}$ and satisfy
\[
 \norm{ab^*cd^*-cd^*ab^*}_{\F}^{2}=f(x,t)=F(t).
\]
Consequently,
\[
 \frac{\norm{ab^*cd^*-cd^*ab^*}_{\F}^{2}}{(b^*Ab)(d^*Ad)}
 =\frac{F(t)}{D(t)}.
\]
Thus every value $F(t)/D(t)$ is attained by a rank-one pair.
Taking the maximum over $t\in[0,1]$ proves~\eqref{eq:scalar-reduction}.
\end{proof}

\section{Determination of the sharp constant}\label{sec:scalar}

By~\cref{prop:reduction}, it remains to maximize $F(t)/D(t)$ on $[0,1]$.
We first compute $F,F'$ at $x_*(t)$ and identify a strictly increasing
function $h$ that determines the sign of the derivative of $F(t)/D(t)$.

\Needspace{13\baselineskip}
\begin{lemma}\label{lem:scalar}
For every $t\in(0,1)$, let $x=x_*(t)$ be the unique maximizer
from~\cref{lem:x-optimum}. Then
\begin{equation}\label{eq:F-value}
 F(t)=\frac{2(1+xt)^3}{1+t^2+2xt},\qquad
 F'(t)=-\frac{(t^2-x^2)(1+xt)}{x(1+t^2+2xt)}<0.
\end{equation}
Moreover, the function
\begin{equation}\label{eq:h-def}
 h(t)=\frac{t^2-x_*(t)^2}{\bigl[t+3x_*(t)\bigr]\bigl[1+t x_*(t)\bigr]}
     =-\frac{F'(t)}{2F(t)-tF'(t)}
\end{equation}
is strictly increasing on $(0,1)$, with
$\lim_{t\downarrow0}h(t)=0$ and $\lim_{t\uparrow1}h(t)=1$.
It satisfies $0<h(t)<t$ for all $t\in(0,1)$.
\end{lemma}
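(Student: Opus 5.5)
The proof has four parts: evaluate $F$ at the critical point, differentiate by the envelope theorem, rewrite the ratio as $h$, and then check its limits and monotonicity.

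\textbf{Value of $F$.} Fix $t\in(0,1)$, write $x=x_*(t)$ and $s=\sqrt{(1-x^2)(1-t^2)}$. The key step is to express $s$ rationally in terms of $x,t$ using the critical-point equation. From $f_x=0$ in \eqref{eq:fx} we get
\[
 s=\frac{2t^2x(1-x^2)}{t-x-2tx^2}.
\]
Equation \eqref{eq:x-cubic} gives $1-t^2=4tx^2(1+xt)/(t-x)$. I will substitute this into $s^2=(1-x^2)(1-t^2)$, or combine the two relations, to obtain a rational expression for $1-xt+s$. The target is
\[
 1-xt+s=\frac{2(1+xt)^2}{1+t^2+2xt}.
\]
I would verify this by cross-multiplying and reducing modulo the cubic \eqref{eq:x-cubic}. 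Then $F=f(x,t)$ gives the first formula in \eqref{eq:F-value}.

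\textbf{Derivative of $F$.} By the envelope theorem, $F'(t)=f_t(x_*(t),t)$, since $f_x=0$ at $x_*$ and $x_*$ is smooth by \cref{lem:x-optimum}. Computing directly,
\[
 f_t=-2t x^2+\frac{\sqrt{1-x^2}\,(x-t-2xt^2)}{\sqrt{1-t^2}}.
\]
I would simplify this using the same substitutions: replace $\sqrt{1-x^2}/\sqrt{1-t^2}$ by $s/(1-t^2)$, then use the rational value of $s$ and the cubic. The aim is the closed form $-(t^2-x^2)(1+xt)/[x(1+t^2+2xt)]$. Negativity follows from $0<x<t$.

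\textbf{The function $h$.} Next I compute $2F-tF'$ from the closed forms. After clearing the common factor $(1+xt)/(1+t^2+2xt)$, the numerator is
\[
 4x(1+xt)^2+t(t^2-x^2),
\]
all over $x$. I expect this to factor as $(t+3x)(1+xt)^2$ modulo the cubic; if not exactly, then after dividing $t^2-x^2$ and reducing. This would confirm the identity \eqref{eq:h-def}. The inequality $0<h<t$ is equivalent to
\[
 t^2-x^2<t(t+3x)(1+tx),
\]
which is clear since the right side exceeds $t^2$.

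\textbf{Limits.} As $t\to0$, the cubic forces $x\sim t$, so $x\to0$. Hence $h<t\to0$.

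As $t\to1$, the cubic gives $4x^2(1+x)\to0$, so $x\to0$, and then $h\to1/(1+0)=1$. More precisely, $x\approx\sqrt{(1-t)/4}$.

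\textbf{Monotonicity (main obstacle).} This is where I expect the most difficulty. The plan is to parametrize the curve by $x$ instead of $t$, or by the ratio $r=x/t\in(0,1)$. From \eqref{eq:x-cubic}, $t$ is an algebraic function of $r$. I would check that $r$ is strictly monotone in $t$ (decreasing from $1$ toward $0$), which I would verify by implicit differentiation of the cubic.

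In these variables,
\[
 h=\frac{t(1-r^2)}{(1+3r)(1+rt^2)}.
\]
I would then show $dh/dt>0$. The method is implicit differentiation, using $x'(t)=-\partial_t G/\partial_x G$, where $G=4tx^2(1+xt)-(t-x)(1-t^2)$ and $\partial_x G>0$ was shown in \cref{lem:x-optimum}. This reduces the claim to the positivity of a polynomial in $x,t$ on the region $0<x<t<1$. I would attempt to prove that positivity by eliminating $1-t^2$ via the cubic, so that every term becomes a manifest sum of positive monomials.
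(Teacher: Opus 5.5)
Your computation of $F$ and $F'$ is the paper's argument: you rationalize $\sqrt{(1-x^2)(1-t^2)}$ via $f_x=0$ and the cubic, then apply the envelope theorem. The same holds for your reduction of $-F'/(2F-tF')$ modulo the cubic, the bound $0<h<t$, and the two limits. The paper uses the handier form $\sqrt{(1-x^2)(1-t^2)}=x(1+t^2+2xt)/t$, which gives $1-xt+s=(t+x+2tx^2)/t$ at once.

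There is one slip. On the curve $G=0$ the identity you need is
\[
 4x(1+xt)^2+t(t^2-x^2)=(t+3x)(1+xt),
\]
not $(t+3x)(1+xt)^2$. The difference of the two sides above is exactly $G=4tx^2(1+xt)-(t-x)(1-t^2)$. With the extra factor $1+xt$, the quotient would not be the $h$ of the statement.

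For monotonicity you take a different route from the paper, and as written it is only a plan. It does go through, but the cubic is essential there, not cosmetic. Write $h=N/M$ with $N=t^2-x^2$ and $M=(t+3x)(1+xt)$. Then $h'(t)\,M^2G_x=PG_x+RG_t$, where $G_x=\partial_xG>0$ and
\[
 P=t^2+6tx+x^2+3t^2x^2+2tx^3+3x^4,\qquad
 R=3t^2+2tx+3x^2+t^4+6t^3x+t^2x^2.
\]
Without the cubic, this polynomial is not positive on $0<x<t<1$: at $x=t/2$ it equals $-t^2/2+O(t^4)$. Now write $G_x=12t^2x^2+8tx+(1-t^2)$ and $G_t=4x^2+8tx^3+2t(t-x)-(1-t^2)$. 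The block $1-t^2$ then enters only through $-(R-P)(1-t^2)$, and $R-P=(t-x)\bigl[2(t-x)+t^3+7t^2x+5tx^2+3x^3\bigr]$. Replacing $(t-x)(1-t^2)$ by $4tx^2(1+xt)$ gives, on the curve,
\begin{align*}
 h'(t)\,M^2G_x
 &=6t^4+6t^3x+54t^2x^2+18tx^3+12x^4\\
 &\quad+2t^6+10t^5x+2t^4x^2+106t^3x^3+36t^2x^4+36tx^5\\
 &\quad+4t^5x^3+56t^4x^4+12t^3x^5+24t^2x^6>0,
\end{align*}
as you hoped.

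The paper avoids this computation. It factors $h$ as $\frac{1-r^2}{1+3r}\cdot\frac{t}{1+xt}$ with $r=x/t$, which is exactly your formula since $rt^2=xt$. It then proves $\frac{\mathrm d}{\mathrm dt}(x_*/t)<0$ by implicit differentiation; you planned this step but then did not use it. This single inequality makes both positive factors increasing. The first increases because $(1-r^2)/(1+3r)$ decreases in $r$. The second increases because $tx_*'<x$ gives $t^2x_*'<tx<1$. Your route is mechanical and checkable term by term, while the paper's is shorter and explains the monotonicity structurally.
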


\begin{proof}
Throughout this proof, $x$ denotes $x_*(t)$ and therefore varies with $t$.
Under this abuse of notation,~\eqref{eq:F-def} becomes $F(t)=f(x,t)$.
By~\cref{lem:x-optimum}, $0<x<t<1$ and $f_x(x,t)=0$.
Using $f_x(x,t)=0$ in~\eqref{eq:fx}, we obtain
\[
 2xt^2\sqrt{1-x^2}=\sqrt{1-t^2}(t-x-2tx^2).
\]
Multiplying both sides by $\sqrt{1-t^2}/(2xt^2)$ and then
using~\eqref{eq:x-cubic}, we obtain
\begin{align}
 \sqrt{(1-t^2)(1-x^2)}
 &=\frac{(1-t^2)(t-x-2tx^2)}{2xt^2}\notag\\
 &=\frac{(t-x)(1-t^2)-2tx^2(1-t^2)}{2xt^2}\notag\\
 &=\frac{4tx^2(1+xt)-2tx^2(1-t^2)}{2xt^2}\notag\\
 &=\frac{x(1+t^2+2xt)}{t}.
 \label{eq:root-identity}
\end{align}
Substituting~\eqref{eq:root-identity} into the definition of $f$ gives
\[
 F(t)=(1+xt)\left[1-xt+\sqrt{(1-x^2)(1-t^2)}\right]
 =\frac{(1+xt)(t+x+2tx^2)}{t}
 =\frac{2(1+xt)^3}{1+t^2+2xt},
\]
\noindent where the last equality follows from~\eqref{eq:x-cubic} through
\[
 (t+x+2tx^2)(1+t^2+2xt)-2t(1+xt)^2
 =4tx^2(1+xt)-(t-x)(1-t^2)=0.
\]

Since $F(t)=f(x_*(t),t)$ and $f_x(x_*(t),t)=0$, differentiation gives
\begin{align*}
 F'(t)
 &=f_x(x,t)x_*'(t)+f_t(x,t)\\
 &=f_t(x,t)\\
 &=-2x^2t+\frac{(1-x^2)(x-t-2xt^2)}{\sqrt{(1-x^2)(1-t^2)}}\\
 &=-2x^2t+\frac{t(1-x^2)(x-t-2xt^2)}{x(1+t^2+2xt)}\\
 &=\frac{-2x^3t(1+t^2+2xt)+t(1-x^2)(x-t-2xt^2)}{x(1+t^2+2xt)}\\
 &=-\frac{(t^2-x^2)(1+xt)}{x(1+t^2+2xt)},
\end{align*}
\noindent where the fourth equality uses~\eqref{eq:root-identity}
and the last equality follows from~\eqref{eq:x-cubic}.
All factors in the final fraction are positive,
so $F'(t)<0$. Since $F(t)>0$, we also have $2F(t)-tF'(t)>0$.
Consequently,
\[
 -\frac{F'(t)}{2F(t)-tF'(t)}
 =\frac{t^2-x^2}{4x(1+xt)^2+t(t^2-x^2)}
 =\frac{t^2-x^2}{(t+3x)(1+xt)}=h(t),
\]
\noindent where the second equality follows from~\eqref{eq:x-cubic} through
\[
 4x(1+xt)^2+t(t^2-x^2)-(t+3x)(1+xt)
 =4tx^2(1+xt)-(t-x)(1-t^2)=0.
\]

To prove that $h$ is strictly increasing, we factor it as
\[
 h(t)=\frac{t^2-x^2}{t(t+3x)}\,\frac{t}{1+xt}.
\]
Both factors are positive because $0<x<t<1$.
Differentiating~\eqref{eq:x-cubic} implicitly with respect to $t$,
treating $x=x_*(t)$ as a function of $t$, yields
\[
 x_*'(t)=\frac{1-3t^2+2tx-4x^2-8tx^3}{12t^2x^2+8tx+1-t^2}.
\]
Thus
\[
 \frac{\mathrm d}{\mathrm dt}\left(\frac{x_*(t)}{t}\right)
 =\frac{t x_*'(t)-x}{t^2}
 =-\frac{2\bigl[8tx^3+4x^2+t(t-x)\bigr]}
          {t(12t^2x^2+8tx+1-t^2)}<0,
\]
\noindent where the second equality uses~\eqref{eq:x-cubic}
and the inequality follows from $0<x<t<1$.
So $t x_*'(t)<x$ and $t^2x_*'(t)<tx<1$.
Differentiating the two factors of $h$ therefore gives
\[
 \frac{\mathrm d}{\mathrm dt}\left(\frac{t^2-x^2}{t(t+3x)}\right)
 =-\frac{3t^2+2tx+3x^2}{(t+3x)^2}
       \frac{\mathrm d}{\mathrm dt}\left(\frac{x_*(t)}{t}\right)>0,
 \qquad
 \frac{\mathrm d}{\mathrm dt}\left(\frac{t}{1+xt}\right)
 =\frac{1-t^2x_*'(t)}{(1+xt)^2}>0.
\]
Their product is strictly increasing. Also,
\[
 0<\frac{t^2-x^2}{t(t+3x)}<1,\qquad
 0<\frac{t}{1+xt}<t.
\]
Multiplying these positive inequalities gives $0<h(t)<t$.

Since $0<h(t)<t$, we have $h(t)\to0$ as $t\downarrow0$.
For the other endpoint,~\eqref{eq:x-cubic} gives
\[
 0<x_*(t)^2
 =\frac{\bigl[t-x_*(t)\bigr](1-t^2)}{4t\bigl[1+t x_*(t)\bigr]}
 \le\frac{1-t^2}{4}\longrightarrow0
 \qquad (t\uparrow1).
\]
Substitution into~\eqref{eq:h-def} now gives $h(t)\to1$.
\end{proof}

\Needspace{10\baselineskip}
\begin{proof}[Proof of \cref{thm:main}]
If $\lambda_1=\lambda_2$, then $D(t)=\lambda_1^2$ for every $t$.
Since $F$ is strictly decreasing in $[0,1]$,
\[
 \CalC_A=\max_{0\le t\le1}\frac{F(t)}{\lambda_1^2}
 =\frac{F(0)}{\lambda_1^2}=\frac{2}{\lambda_1^2}.
\]
Let $e_1,e_2\in\C^n$ be orthonormal eigenvectors of $A$
corresponding to $\lambda_1$, and let $X=e_1e_2^*$ and $Y=e_2e_1^*$.
For real symmetric $A$, choose real orthonormal eigenvectors $e_1,e_2$
corresponding to $\lambda_1$, and let $X=e_1e_2^\top$ and $Y=e_2e_1^\top$.
In both cases,
\[
 \norm{XY-YX}_{\F}^2=2,\qquad
 \norm{X}_A^2=\norm{Y}_A^2=\lambda_1.
\]
Thus $X,Y$ attain the bound $2/\lambda_1^2$.

Assume now that $\lambda_1<\lambda_2$, so
$0<{(\lambda_2-\lambda_1)/(\lambda_2+\lambda_1)}<1$.
By~\cref{lem:scalar}, $h$ is continuous and strictly increasing in $(0,1)$,
with $\lim_{t\downarrow0}h(t)=0$ and $\lim_{t\uparrow1}h(t)=1$.
Thus there is exactly one $t_0\in(0,1)$ satisfying
\begin{equation}\label{eq:optimal-h}
 h(t_0)=\frac{\lambda_2-\lambda_1}{\lambda_2+\lambda_1}.
\end{equation}
Since $0<h(t)<t$ for all $t\in(0,1)$ by~\cref{lem:scalar},
we have ${(\lambda_2-\lambda_1)/(\lambda_2+\lambda_1)}<t_0<1$.
We now check the derivative of $F(t)/D(t)$.
By~\eqref{eq:h-def}, we have $F'(t)=-h(t)\bigl[2F(t)-tF'(t)\bigr]$.
Consequently, for any real number $s$,
\begin{align}
 (1-st)F'(t)+2sF(t)
 &=F'(t)+s\bigl[2F(t)-tF'(t)\bigr]
 =\bigl[s-h(t)\bigr]\bigl[2F(t)-tF'(t)\bigr].
 \label{eq:derivative-factorization}
\end{align}
For $0<t<{(\lambda_2-\lambda_1)/(\lambda_2+\lambda_1)}$, applying~\eqref{eq:D-first} and then
\eqref{eq:derivative-factorization} with $s=t$ gives
\[
 \frac{\mathrm d}{\mathrm dt}\left(\frac{F(t)}{D(t)}\right)
 =\frac{(1-t^2)F'(t)+2tF(t)}{\lambda_1\lambda_2(1-t^2)^2}
 =\frac{\bigl[2F(t)-tF'(t)\bigr]\bigl[t-h(t)\bigr]}
        {\lambda_1\lambda_2(1-t^2)^2}>0.
\]
For ${(\lambda_2-\lambda_1)/(\lambda_2+\lambda_1)}<t<1$, applying~\eqref{eq:D-second} and then
\eqref{eq:derivative-factorization} with $s={(\lambda_2-\lambda_1)/(\lambda_2+\lambda_1)}$ gives
\begin{align*}
 \frac{\mathrm d}{\mathrm dt}\left(\frac{F(t)}{D(t)}\right)
 &=\frac{4\Bigl(\bigl[\lambda_1(1+t)+\lambda_2(1-t)\bigr]F'(t)
              +2(\lambda_2-\lambda_1)F(t)\Bigr)}
        {\bigl[\lambda_1(1+t)+\lambda_2(1-t)\bigr]^3}\\
 &=\frac{4\bigl[2F(t)-tF'(t)\bigr]
              \bigl[(\lambda_2-\lambda_1)-(\lambda_1+\lambda_2)h(t)\bigr]}
        {\bigl[\lambda_1(1+t)+\lambda_2(1-t)\bigr]^3}.
\end{align*}
By~\eqref{eq:optimal-h}, the sign of the last derivative equals the sign of $h(t_0)-h(t)$,
since all remaining factors are positive.
Thus it is positive for ${(\lambda_2-\lambda_1)/(\lambda_2+\lambda_1)}<t<t_0$
and negative for $t_0<t<1$.
Together with the continuity of $F(t)/D(t)$, these derivative signs show that
$F(t)/D(t)$ increases strictly on $[0,t_0]$ and decreases strictly on $[t_0,1]$. Hence $t_0$ is the unique global maximizer.
Let $x_0=x_*(t_0)$. Substituting $t=t_0$ into~\eqref{eq:x-cubic}
gives the first equation in~\eqref{eq:optimal-system}, while
\eqref{eq:h-def} and~\eqref{eq:optimal-h} give the second.

Conversely, let $(x,t)$ be any solution of~\eqref{eq:optimal-system}
with $0<x<t<1$.
By~\cref{lem:x-optimum}, the first equation gives $x=x_*(t)$.
The second equation, together with~\eqref{eq:h-def} and~\eqref{eq:optimal-h}, then gives
$h(t)=h(t_0)$.
Since $h$ is strictly increasing, $t=t_0$ and $x=x_*(t_0)=x_0$.
Because ${(\lambda_2-\lambda_1)/(\lambda_2+\lambda_1)}<t_0<1$, substituting~\eqref{eq:D-second}
and~\eqref{eq:F-value} into~\eqref{eq:scalar-reduction} yields
\[
 \CalC_A=\frac{F(t_0)}{D(t_0)}
 =\frac{4F(t_0)}{\bigl[\lambda_1(1+t_0)+\lambda_2(1-t_0)\bigr]^2}
 =\frac{8(1+x_0t_0)^3}
 {\bigl[\lambda_1(1+t_0)+\lambda_2(1-t_0)\bigr]^2(1+t_0^2+2x_0t_0)}.
\]
For real symmetric $A$, choose real eigenvectors of $A$ corresponding to
the eigenvalues $\lambda_1,\lambda_2$.
The attaining vectors in~\cref{lem:Rayleigh,lem:numerator} have real
coefficients in that basis, so real matrices attain the same sharp constant
as in the complex case.
This proves the real assertion as well.
\end{proof}

\section{Equality cases and consequences}\label{sec:equality}

We now determine all rank-one pairs $X,Y\in M_n(\C)$ satisfying
\begin{equation}\label{eq:sharp-equality}
 \norm{XY-YX}_{\F}^{2}=\CalC_A\norm{X}_A^2\norm{Y}_A^2.
\end{equation}

\begin{theorem}\label{thm:equality}
Let $X,Y\in M_n(\C)$ have rank one.

\noindent\textup{(1)} If $\lambda_1=\lambda_2$, then~\eqref{eq:sharp-equality} holds if and only if
\begin{equation}\label{eq:equality-equal}
 X=\xi e_1e_2^*,\qquad Y=\eta e_2e_1^*,
\end{equation}
where $\xi,\eta\in\C\setminus\{0\}$ and $e_1,e_2\in\C^n$ are
orthonormal eigenvectors of $A$ corresponding to the eigenvalue $\lambda_1$.

\addvspace{\baselineskip}
\noindent\textup{(2)} If $\lambda_1<\lambda_2$, let $(x_0,t_0)$ be the unique pair in
\cref{thm:main}. Then~\eqref{eq:sharp-equality} holds if and only if
$X=\xi ab^*$ and $Y=\eta cd^*$ for $\xi,\eta\in\C\setminus\{0\}$
and unit vectors $a,b,c,d\in\C^n$ of the form
\begin{align}
 a&=\sqrt{\frac{1-x_0}{2}}\,e_1-\sqrt{\frac{1+x_0}{2}}\,e_2,
 &c&=\sqrt{\frac{1-x_0}{2}}\,e_1+\sqrt{\frac{1+x_0}{2}}\,e_2,
 \label{eq:extremal-left}\\
 b&=\sqrt{\frac{1+t_0}{2}}\,e_1+\sqrt{\frac{1-t_0}{2}}\,e_2,
 &d&=\sqrt{\frac{1+t_0}{2}}\,e_1-\sqrt{\frac{1-t_0}{2}}\,e_2,
 \label{eq:extremal-right}
\end{align}
\noindent where $e_1,e_2\in\C^n$ are orthonormal eigenvectors of $A$
corresponding to the eigenvalues $\lambda_1$ and $\lambda_2$, respectively.
\end{theorem}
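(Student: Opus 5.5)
The plan is to trace equality back through the chain of inequalities in the proof of \cref{prop:reduction}. Normalize $X=\xi ab^*$, $Y=\eta cd^*$ with unit vectors $a,b,c,d$; the quotient is scale invariant, so \eqref{eq:sharp-equality} is equivalent to equality for $ab^*,cd^*$. With $t=\abs{b^*d}$, the chain reads
\[
 \frac{\norm{ab^*cd^*-cd^*ab^*}_{\F}^2}{(b^*Ab)(d^*Ad)}\le\frac{f(\abs{a^*c},t)}{(b^*Ab)(d^*Ad)}\le\frac{F(t)}{(b^*Ab)(d^*Ad)}\le\frac{F(t)}{D(t)}\le\CalC_A .
\]
Equality in \eqref{eq:sharp-equality} forces each inequality to be an equality. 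The sufficiency directions are direct computations: in case (1) this is the computation already made in the proof of \cref{thm:main}, and in case (2) the vectors \eqref{eq:extremal-left}--\eqref{eq:extremal-right} are exactly the construction of \cref{prop:reduction} with $K=I_2$.

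\textbf{Case (2), $\lambda_1<\lambda_2$.} In the proof of \cref{thm:main}, $F/D$ is strictly increasing on $[0,t_0]$ and strictly decreasing on $[t_0,1]$, so the last equality forces $t=t_0\in(0,1)$. By \cref{lem:x-optimum}, $x\mapsto f(x,t_0)$ has the unique maximizer $x_0$, so $\abs{a^*c}=x_0\in(0,1)$.

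Next rotate phases. Replace $d$ by $\bar z d$ and $c$ by a suitable unimodular multiple, absorbing the phases into $\eta$, so that $b^*d=t_0$ and $a^*c=-x_0$.

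The denominator equality $(b^*Ab)(d^*Ad)=D(t_0)$ with $t_0>(\lambda_2-\lambda_1)/(\lambda_2+\lambda_1)$ lets me invoke the characterization in \cref{lem:Rayleigh}. It gives $b,d$ of the form \eqref{eq:right-vectors}, where $e_1,e_2$ are the vectors in \eqref{eq:right-basis} and satisfy $Ae_i=\lambda_ie_i$. The numerator equality, by the equality clause of \cref{lem:numerator} (valid since $0<x_0,t_0<1$), gives $u_i=ze_i$. Then \eqref{eq:left-vectors} yields $a,c$ equal to $z$ times \eqref{eq:extremal-left}, and the common factor $z$ is absorbed into $\xi,\eta$.

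\textbf{Case (1), $\lambda_1=\lambda_2$.} Here $D\equiv\lambda_1^2$ and $F$ is strictly decreasing by \eqref{eq:F-value} together with continuity. So equality forces $t=0$, then $\abs{a^*c}=0$ (the unique maximizer of $f(x,0)=1+\sqrt{1-x^2}$), and $b^*Ab=d^*Ad=\lambda_1$. The last condition means $b,d$ are eigenvectors for $\lambda_1$, and $b\perp d$.

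It remains to show the numerator equals $2$ only when $a\propto d$ and $c\propto b$. From the identity in \eqref{eq:quadratic-form} with $x=t=0$, the squared commutator norm is $\abs{b^*c}^2+\abs{d^*a}^2$, and this equals $2$ exactly when $\abs{b^*c}=\abs{d^*a}=1$. That gives $c=\alpha b$ and $a=\beta d$. Setting $e_1=d$ and $e_2=b$ then yields $X=\xi' e_1e_2^*$ and $Y=\eta' e_2e_1^*$.

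\textbf{Main obstacle.} I expect the hardest step to be the bookkeeping in case (2). The $e_1,e_2$ produced by \cref{lem:Rayleigh} must be the same pair as in \cref{lem:numerator}; both are defined by \eqref{eq:right-basis}, so they do coincide. The phase normalizations of $c,d$ must be absorbed into the scalars $\xi,\eta$ consistently. Beyond that, I only need to check that the endpoint values $t\in\{0,1\}$ are excluded by the strict monotonicity of $F/D$.
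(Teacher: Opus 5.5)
Your proposal is correct and follows the paper's proof essentially step for step. It uses the same four-step equality chain, forces $t=t_0$, $x=x_0$ (resp.\ $t=x=0$), then applies the equality clauses of \cref{lem:Rayleigh} and \cref{lem:numerator} in case (2) and the Cauchy--Schwarz argument on $\abs{b^*c}^2+\abs{d^*a}^2$ in case (1). Your remark that the two lemmas produce the same $e_1,e_2$, since both are determined by \eqref{eq:right-basis}, makes explicit a point the paper leaves implicit.
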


\Needspace{6\baselineskip}
\begin{proof}
Write $X=\alpha \widetilde a b^*$ and $Y=\beta \widetilde c d^*$, where $\widetilde a,b,\widetilde c,d\in\C^n$
are unit vectors and $\alpha,\beta\in\C\setminus\{0\}$.
Let $x=\abs{\widetilde a^*\widetilde c}$ and $t=\abs{b^*d}$.
Multiplying $\widetilde c,d$ by suitable scalars of modulus one and adjusting
$\beta$ accordingly, we may assume $\widetilde a^*\widetilde c=-x$ and $b^*d=t$
without changing $X,Y$.
The scalar factors cancel, and
\[
 \frac{\norm{XY-YX}_{\F}^{2}}{\norm{X}_A^2\norm{Y}_A^2}
 =\frac{\norm{\widetilde a b^*\widetilde c d^*-\widetilde c d^*\widetilde a b^*}_{\F}^{2}}{(b^*Ab)(d^*Ad)}
 \le\frac{f(x,t)}{(b^*Ab)(d^*Ad)}
 \le\frac{F(t)}{(b^*Ab)(d^*Ad)}
 \le\frac{F(t)}{D(t)}\le\CalC_A.
\]
Therefore~\eqref{eq:sharp-equality}
holds if and only if equality holds at each of these four steps.

\addvspace{\baselineskip}
\noindent\textup{(1)} Suppose that $\lambda_1=\lambda_2$ and equality holds.
Since $D(t)=\lambda_1^2$ and $F$ is strictly decreasing, equality in the
last inequality forces $t=0$.
Then $f(x,0)=1+\sqrt{1-x^2}$ equals $F(0)=2$ only at $x=0$.
Hence $b\perp d$ and $\widetilde a\perp\widetilde c$.
Equality in~\cref{lem:numerator} gives
\[
 \norm{\widetilde a b^*\widetilde c d^*-\widetilde c d^*\widetilde a b^*}_{\F}^{2}=f(0,0)=2.
\]
Moreover, since $\tr(d\widetilde a^*\widetilde c b^*)=(\widetilde a^*\widetilde c)(b^*d)=0$ and
$\norm{\widetilde a d^*}_{\F}=\norm{\widetilde c b^*}_{\F}=1$, expanding the squared Frobenius norm yields
\[
 2=\norm{\widetilde a b^*\widetilde c d^*-\widetilde c d^*\widetilde a b^*}_{\F}^{2}
  =\abs{b^*\widetilde c}^2+\abs{d^*\widetilde a}^2\le2.
\]
The equality case of the Cauchy--Schwarz inequality gives
$\widetilde c=(b^*\widetilde c)b$ and $\widetilde a=(d^*\widetilde a)d$, with
$\abs{b^*\widetilde c}=\abs{d^*\widetilde a}=1$.
Equality in the denominator also gives
$(b^*Ab)(d^*Ad)=\lambda_1^2$.
As each factor is at least $\lambda_1$, both must equal $\lambda_1$.
Thus $b,d$ are orthonormal eigenvectors of $A$ corresponding to the
eigenvalue $\lambda_1$.
Let $e_1=d$, $e_2=b$, $\xi=\alpha(d^*\widetilde a)$, and $\eta=\beta(b^*\widetilde c)$.
Then $X=\xi e_1e_2^*$ and $Y=\eta e_2e_1^*$, with
$\xi,\eta\in\C\setminus\{0\}$, proving~\eqref{eq:equality-equal}.

Conversely, the matrices in~\eqref{eq:equality-equal} satisfy
$XY-YX=\xi\eta(e_1e_1^*-e_2e_2^*)$ and
$\norm{X}_A^2\norm{Y}_A^2=\lambda_1^2\abs{\xi\eta}^2$. Hence
\[
 \frac{\norm{XY-YX}_{\F}^{2}}{\norm{X}_A^2\norm{Y}_A^2}
 =\frac{\abs{\xi\eta}^2\norm{e_1e_1^*-e_2e_2^*}_{\F}^{2}}
         {\lambda_1^2\abs{\xi\eta}^2}
 =\frac{2}{\lambda_1^2}=\CalC_A,
\]
\noindent where $e_1\perp e_2$ makes the two rank-one projections
orthogonal in the Frobenius inner product.

\addvspace{\baselineskip}
\noindent\textup{(2)} Suppose that $\lambda_1<\lambda_2$ and equality holds.
The unique maximum of $F(t)/D(t)$ forces $t=t_0$.
The unique maximum of $f(x,t_0)$ then forces $x=x_*(t_0)=x_0$.
Since $t_0>{(\lambda_2-\lambda_1)/(\lambda_2+\lambda_1)}$, equality in the denominator and~\cref{lem:Rayleigh}
show that $b,d$ have the forms in~\eqref{eq:extremal-right} for orthonormal eigenvectors $e_1,e_2$
with $Ae_1=\lambda_1e_1$ and $Ae_2=\lambda_2e_2$.
Equality in~\cref{lem:numerator} gives $u_1=ze_1$ and $u_2=ze_2$
for some $z\in\C$ with $\abs{z}=1$.
Let $a,c$ be the vectors in~\eqref{eq:extremal-left}.
Applying~\eqref{eq:left-vectors} to $\widetilde a,\widetilde c$
and substituting $u_1=ze_1$ and $u_2=ze_2$, we obtain
\[
 \widetilde a=za,\qquad \widetilde c=zc.
\]
Let $\xi=\alpha z$ and $\eta=\beta z$.
Then $\xi,\eta\in\C\setminus\{0\}$ and
\[
 X=\alpha\widetilde a b^*=\xi ab^*,\qquad
 Y=\beta\widetilde c d^*=\eta cd^*.
\]
Together with~\eqref{eq:extremal-right}, this proves necessity.

\Needspace{8\baselineskip}
Conversely, the vectors in~\eqref{eq:extremal-left} and
\eqref{eq:extremal-right} satisfy $a^*c=-x_0$, $b^*d=t_0$, and
\[
 b^*Ab=d^*Ad=\frac{1}{2}\bigl[\lambda_1(1+t_0)+\lambda_2(1-t_0)\bigr].
\]
They also satisfy the equality condition in~\cref{lem:numerator}
with $u_1=e_1$, $u_2=e_2$. Hence
\[
 \frac{\norm{XY-YX}_{\F}^{2}}{\norm{X}_A^2\norm{Y}_A^2}
 =\frac{f(x_0,t_0)}{D(t_0)}
 =\frac{F(t_0)}{D(t_0)}=\CalC_A.
\]
This proves sufficiency when $\lambda_1<\lambda_2$.
\end{proof}

The scalar formula~\eqref{eq:scalar-reduction} also gives direct comparisons for the sharp constant.

\begin{corollary}\label{cor:bounds}
For $0<\lambda_1<\lambda_2$,
\begin{equation}\label{eq:strict-lower}
 \max\left\{\frac{2}{\lambda_1\lambda_2},\frac{1}{\lambda_1^2}\right\}
 <\CalC_A
 <\frac{1}{\lambda_1}\left(\frac{1}{\lambda_1}+\frac{1}{\lambda_2}\right).
\end{equation}
\end{corollary}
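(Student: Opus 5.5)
We work throughout with the scalar reduction $\CalC_A=\max_{0\le t\le1}F(t)/D(t)$ from \cref{prop:reduction}, together with the fact, established in the proof of \cref{thm:main}, that for $\lambda_1<\lambda_2$ the quotient $F/D$ has its unique global maximizer at $t_0\in(s,1)$, where $s=(\lambda_2-\lambda_1)/(\lambda_2+\lambda_1)$. Each of the three strict inequalities will come from evaluating or bounding $F/D$ at a suitable point.

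\textbf{The lower bound $2/(\lambda_1\lambda_2)$.} At $t=0$, \eqref{eq:D-first} gives $D(0)=\lambda_1\lambda_2$, and \cref{lem:x-optimum} gives $F(0)=2$. So $F(0)/D(0)=2/(\lambda_1\lambda_2)$. Since $t_0>s>0$, the point $t=0$ is not the unique maximizer. Because $F/D$ is strictly increasing on $[0,t_0]$, we get $2/(\lambda_1\lambda_2)<\CalC_A$.

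\textbf{The lower bound $1/\lambda_1^2$.} At $t=1$, $D(1)=\lambda_1^2$ by \eqref{eq:D-second}, and $F(1)=1$. Hence $F(1)/D(1)=1/\lambda_1^2$. Since $t_0<1$ and $F/D$ is strictly decreasing on $[t_0,1]$, we get $1/\lambda_1^2<\CalC_A$.

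\textbf{The upper bound.} Here we show that $F(t)/D(t)<\lambda_1^{-1}(\lambda_1^{-1}+\lambda_2^{-1})$ at $t=t_0$, or more simply for every $t\in(s,1)$. On that range $D(t)=\tfrac14[\lambda_1(1+t)+\lambda_2(1-t)]^2$. A natural route is to bound $F$ by its value at $x=0$:
\[
F(t)\le 1+\sqrt{1-t^2}.
\]
This is wrong, however, because $F(t)\ge f(0,t)$, not $\le$. So we instead use the crude bound $f(x,t)\le(1+xt)(1-xt)+\sqrt{1-t^2}\le 1+\sqrt{1-t^2}+\text{something}$, or, more cleanly, we argue abstractly.

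\emph{Abstract route.} Fang and Cheng's bound $\lambda_1^{-1}(\lambda_1^{-1}+\lambda_2^{-1})$ is a valid upper bound, so $\CalC_A\le$ it. We only need to rule out equality. If equality held, the equality pair of \cref{thm:equality}(2) would have to be an extremal pair for the Fang--Cheng estimate as well. Since their bound is not sharp when $\lambda_1<\lambda_2$, this is impossible. That argument relies on the external non-sharpness claim, so we would rather give a direct check.

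\emph{Direct route.} Write $D(t)=\lambda_1\lambda_2(1-t^2)+[\lambda_1-\tfrac12(\lambda_1+\lambda_2)(1-t)]^2\ge\lambda_1\lambda_2(1-t^2)$, and also $D(t)\ge\lambda_1^2$. Combining these and using the explicit form \eqref{eq:F-value} of $F(t)$ at $x=x_*(t)$, we aim to show
\[
\lambda_1\lambda_2F(t)<(\lambda_1+\lambda_2)D(t).
\]
Dividing by $\lambda_1$ turns this into an inequality of the form $\lambda_2F<(1+\lambda_2/\lambda_1)D/\lambda_1$. The convex combination $D\ge\theta\lambda_1\lambda_2(1-t^2)+(1-\theta)\lambda_1^2$ should suffice, with $\theta$ chosen to match the weights. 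This final verification is the main obstacle: it amounts to a two-variable inequality in $(t,\lambda_2/\lambda_1)$. We would try first to prove $F(t)\le 1+(1-t^2)$, i.e. $F(t)\le 2-t^2$. This looks plausible from $F(0)=2$ and $F(1)=1$, and one can check it via $f(x,t)\le 1-x^2t^2+(1+xt)\sqrt{(1-x^2)(1-t^2)}$. Granting it, the bound follows from $\lambda_1\lambda_2(2-t^2)=\lambda_1\lambda_2(1-t^2)+\lambda_1\lambda_2<\lambda_2D/\lambda_1\cdot\ldots$, and strictness follows from $\lambda_1<\lambda_2$.
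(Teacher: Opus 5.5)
Your two lower bounds are exactly the paper's argument: evaluate $F/D$ at $t=0$ and $t=1$, and use that the unique maximizer $t_0$ lies strictly inside $(0,1)$. Your ``abstract route'' for the upper bound is also what the paper does, since it simply cites Fang and Cheng's non-sharpness result. That route is complete as it stands. The detour through equality pairs is not even needed, because non-sharpness of their bound, together with attainment of the maximum in \eqref{eq:C-def}, \emph{is} the strict inequality. If you stop there, you have the paper's proof.

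The ``direct route'' you say you would prefer does not work and should be dropped, for three reasons.
\begin{enumerate}
\item The target is mis-scaled. The bound to prove is $\lambda_1^2\lambda_2F(t)<(\lambda_1+\lambda_2)D(t)$. Your inequality $\lambda_1\lambda_2F(t)<(\lambda_1+\lambda_2)D(t)$ is the bound $F/D<\lambda_1^{-1}+\lambda_2^{-1}$, which has the wrong homogeneity in $A$. Your ``divided'' form does not follow from it.
\item The key claim $F(t)\le2-t^2$ is false. For $0<t<1$ one has $F(t)\ge f(0,t)=1+\sqrt{1-t^2}>2-t^2$, because $\sqrt{1-t^2}>1-t^2$. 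The inequality you quote for $f$ is just its definition. The ``crude bound'' $f(x,t)\le(1+xt)(1-xt)+\sqrt{1-t^2}$ also fails, e.g.\ at $t=0.9$, $x=0.2$.
\item Decisively, no estimate of $F$ can rescue the plan of replacing $D(t)$ by $\max\{\lambda_1\lambda_2(1-t^2),\lambda_1^2\}$, let alone by a convex combination of the two. At $t_c=\sqrt{1-\lambda_1/\lambda_2}$ both terms equal $\lambda_1^2$, and the required inequality becomes exactly $F(t_c)<2-t_c^2$, which is false. The loss also occurs at the relevant point $t_0$. For $\lambda_2=3\lambda_1$ one has $t_c<t_0<0.93$, so the resulting right-hand side is $4/3$, while $F(t_0)\ge1+\sqrt{1-0.93^2}>1.36$.
\end{enumerate}

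If you want strictness without quoting their non-sharpness claim, here is a short argument that uses only \eqref{eq:known-fully-weighted}. For $Z=XY-YX$ one has $\norm{Z}_\F^2\le\lambda_1^{-1}\norm{Z}_A^2\le\lambda_1^{-1}(\lambda_1^{-1}+\lambda_2^{-1})\norm{X}_A^2\norm{Y}_A^2$. Suppose $\CalC_A$ equaled this constant. Then the extremal pair $X=\xi ab^*$, $Y=\eta cd^*$ of \cref{thm:equality}(2) would give equality in the first step, i.e.\ $\norm{Z(A-\lambda_1I_n)^{1/2}}_\F^2=\tr\bigl((A-\lambda_1I_n)Z^*Z\bigr)=0$. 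Hence $(\lambda_2-\lambda_1)Ze_2=Z(A-\lambda_1I_n)e_2=0$. But $Ze_2=-\xi\eta\sqrt{(1-t_0)/2}\,\bigl[(b^*c)a+(d^*a)c\bigr]\neq0$, since $b^*c>0$ and $a,c$ are linearly independent.
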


\begin{proof}
The values at $t=0,1$ are
\[
 \frac{F(0)}{D(0)}=\frac{2}{\lambda_1\lambda_2},
 \qquad \frac{F(1)}{D(1)}=\frac{1}{\lambda_1^2}.
\]
Neither endpoint is the unique maximizer $t_0\in(0,1)$, so both
values are strictly smaller than $\CalC_A$.
The strict upper bound in~\eqref{eq:strict-lower} is the nonoptimality
result of Fang and Cheng~\cite[Section~4]{FC}.
\end{proof}

\begin{corollary}\label{cor:monotonicity}
For fixed $\lambda_1>0$, the sharp constant $\CalC_A$ is strictly decreasing as a
function of $\lambda_2\in[\lambda_1,\infty)$ and tends to $1/\lambda_1^2$
as $\lambda_2\to\infty$.
\end{corollary}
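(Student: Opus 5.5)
We work throughout with the scalar reduction of \cref{prop:reduction}. For fixed $\lambda_1$, set $s=(\lambda_2-\lambda_1)/(\lambda_2+\lambda_1)\in[0,1)$; this is a strictly increasing function of $\lambda_2$ that tends to $1$ as $\lambda_2\to\infty$. By \cref{lem:Rayleigh},
\[
 \CalC_A=\max_{0\le t\le1}\frac{F(t)}{D_{\lambda_2}(t)},
\]
where $F$ does not depend on $A$. Monotonicity in $\lambda_2$ will therefore follow from monotonicity of $D_{\lambda_2}(t)$ in $\lambda_2$ for each fixed $t$.

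The first step is to show that $\lambda_2\mapsto D_{\lambda_2}(t)$ is nondecreasing for every $t$, and strictly increasing for $t<1$.
\begin{itemize}
\item On the second branch of \eqref{eq:D-second}, $D=\tfrac14[\lambda_1(1+t)+\lambda_2(1-t)]^2$ visibly increases in $\lambda_2$ when $t<1$.
\item On the first branch \eqref{eq:D-first}, $D=\lambda_1\lambda_2(1-t^2)$ also increases in $\lambda_2$ when $t<1$.
\item As $\lambda_2$ grows, the breakpoint $s$ moves to the right, so a fixed $t$ may pass from the second formula to the first. The function $D$ is continuous across the breakpoint in $\lambda_2$, because both formulas agree at $t=s$. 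Moreover, by Lemma~\ref{lem:Rayleigh}, $D$ equals the minimum over $h_{11}\in[\lambda_1,\lambda_2]$ of the right side of \eqref{eq:Rayleigh-square}. Hence piecewise monotonicity together with continuity gives global monotonicity.
\end{itemize}
A cleaner alternative avoids the branches entirely. If $\lambda_2<\lambda_2'$, take $A=\diag(\lambda_1,\lambda_2)$ and $A'=\diag(\lambda_1,\lambda_2')$ in dimension two, which suffices by \cref{prop:reduction}. Then $A'\ge A$ in the Loewner order, so every Rayleigh quotient product is at least as large, and $D_{\lambda_2'}\ge D_{\lambda_2}$.

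Next, take $\lambda_1\le\lambda_2<\lambda_2'$ and let $t'$ be a maximizer for $\lambda_2'$. When $\lambda_2'>\lambda_1$ we have $t'=t_0\in(0,1)$, so $t'<1$. Strict monotonicity of $D$ at $t'<1$ then gives
\[
 \CalC_{A'}=\frac{F(t')}{D_{\lambda_2'}(t')}<\frac{F(t')}{D_{\lambda_2}(t')}\le\CalC_A.
\]
This proves strict decrease on $[\lambda_1,\infty)$, including the endpoint $\lambda_2=\lambda_1$, since there too $t'<1$.

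For the limit, the lower bound $\CalC_A>1/\lambda_1^2$ comes from \cref{cor:bounds}. For an upper bound, split $[0,1]$ at some $\tau\in(0,1)$.
\begin{itemize}
\item For $t\le\tau$: when $t\le s$ we have $D\ge\lambda_1\lambda_2(1-\tau^2)$. When $t>s$ we have $D\ge\tfrac14\lambda_2^2(1-\tau)^2$. With $F\le2$, the ratio is at most $O(1/\lambda_2)\to0$.
\item For $t\ge\tau$: we have $D\ge\tfrac14\lambda_1^2(1+t)^2\ge\tfrac14\lambda_1^2(1+\tau)^2$, and $F(t)\le F(\tau)$ since $F$ is decreasing.
\end{itemize}
Therefore
\[
 \limsup_{\lambda_2\to\infty}\CalC_A\le\frac{4F(\tau)}{\lambda_1^2(1+\tau)^2},
\]
and letting $\tau\uparrow1$ with $F(1)=1$ gives $1/\lambda_1^2$.

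The main subtlety is the bound for $t\ge\tau$: one must check that $D\ge\tfrac14[\lambda_1(1+t)]^2$ holds on both branches. On the second branch this is immediate, since the $\lambda_2$ term is nonnegative. On the first branch it follows because $D$ is the minimum of the expression in \eqref{eq:Rayleigh-square}, which is at least its value with $h_{11}$ free, and that value is still $\ge\lambda_1\lambda_2(1-t^2)$. For $\tau$ close to $1$ and $\lambda_2$ large we have $t>s$ anyway, so this case is manageable.
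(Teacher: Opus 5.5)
\textbf{Monotonicity.} Your monotonicity argument is essentially the paper's. You take the maximizer $t'$ for the larger eigenvalue $\lambda_2'$, get $t'<1$ from \cref{thm:main}, and compare the two denominators at $t'$. The paper skips the branch analysis by noting that $t'$ exceeds both breakpoints, so only \eqref{eq:D-second} is ever evaluated. Your proof that $D(t)$ is strictly increasing in $\lambda_2$ for every $t<1$ is also correct: it glues two strictly increasing branches continuously.

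The Loewner-order ``cleaner alternative'', however, gives only $D_{\lambda_2'}\ge D_{\lambda_2}$ by itself, not the strict inequality you need. It becomes strict only once you observe that a minimizing pair with $t<1$ spans $\C^2$, so one of the two vectors has a nonzero $e_2$-component.

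\textbf{The limit.} Here you take a different route. The paper reads the limit off \cref{cor:bounds}, namely $0<\CalC_A-1/\lambda_1^2<1/(\lambda_1\lambda_2)$, whose upper bound is the Fang--Cheng estimate. Your $\limsup$ argument works through the scalar reduction. It avoids that external input but does not give the explicit rate.

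One step in it is justified incorrectly. On the first branch, minimizing \eqref{eq:Rayleigh-square} over free $h_{11}$ just returns $\lambda_1\lambda_2(1-t^2)$. That is $D(t)$ itself, so it does not compare $D(t)$ with $\frac14\lambda_1^2(1+t)^2$. Your closing remark is also backwards. For fixed $\tau$ and $\lambda_2\to\infty$ the breakpoint $s$ tends to $1$, so $[\tau,s]$ lies exactly on the first branch.

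The inequality is nevertheless true, for a trivial reason. Each Rayleigh quotient is at least $\lambda_1$, so $D(t)\ge\lambda_1^2\ge\frac14\lambda_1^2(1+t)^2$. Using $D\ge\lambda_1^2$ directly on $[\tau,1]$ gives $\limsup_{\lambda_2\to\infty}\CalC_A\le F(\tau)/\lambda_1^2$. Then $F(\tau)\to F(1)=1$ as $\tau\uparrow1$ completes the proof.
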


\begin{proof}
Keep $\lambda_1$ fixed and let
$\lambda_1\le\lambda_2<\widetilde\lambda_2$.
Only the two smallest eigenvalues affect the constant, so it is enough
to compare $A=\diag(\lambda_1,\lambda_2)$ with
$\widetilde A=\diag(\lambda_1,\widetilde\lambda_2)$.
Let $t_1$ attain the maximum in~\eqref{eq:scalar-reduction} for $\widetilde A$.
By~\cref{thm:main},
\[
 \frac{\lambda_2-\lambda_1}{\lambda_2+\lambda_1}
 <\frac{\widetilde\lambda_2-\lambda_1}{\widetilde\lambda_2+\lambda_1}
 <t_1<1.
\]
Thus~\eqref{eq:D-second} applies to both $A$ and $\widetilde A$ at $t=t_1$.
Since $\widetilde\lambda_2>\lambda_2$ and $t_1<1$,
\[
 \bigl[\lambda_1(1+t_1)+\widetilde\lambda_2(1-t_1)\bigr]
 -\bigl[\lambda_1(1+t_1)+\lambda_2(1-t_1)\bigr]
 =(\widetilde\lambda_2-\lambda_2)(1-t_1)>0.
\]
Since $F(t_1)>0$,~\eqref{eq:scalar-reduction} and~\eqref{eq:D-second} give
\[
 \CalC_A
 \ge\frac{F(t_1)}{D(t_1)}
 =\frac{4F(t_1)}{\bigl[\lambda_1(1+t_1)+\lambda_2(1-t_1)\bigr]^2}
 >\frac{4F(t_1)}{\bigl[\lambda_1(1+t_1)+\widetilde\lambda_2(1-t_1)\bigr]^2}
 =\CalC_{\widetilde A},
\]
\noindent where the last equality uses the choice of $t_1$.
For fixed $\lambda_1>0$,~\eqref{eq:strict-lower} gives
\[
 0<\CalC_A-\frac{1}{\lambda_1^2}<\frac{1}{\lambda_1\lambda_2}
 \longrightarrow0\qquad(\lambda_2\to\infty).\qedhere
\]
\end{proof}

\begin{corollary}\label{cor:nonnilpotent}
If $\lambda_1<\lambda_2$, then every rank-one pair $X,Y\in M_n(\C)$
satisfying~\eqref{eq:sharp-equality} consists of two matrices that are
neither normal nor nilpotent.
\end{corollary}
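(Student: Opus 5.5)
By Theorem~\ref{thm:equality}(2), every equality pair has the form $X=\xi ab^*$ and $Y=\eta cd^*$. Here $\xi,\eta\neq0$, and $a,b,c,d$ are given by \eqref{eq:extremal-left} and \eqref{eq:extremal-right} for orthonormal eigenvectors $e_1,e_2$ with $Ae_i=\lambda_ie_i$. A rank-one matrix $\xi ab^*$ is nilpotent exactly when its only possibly nonzero eigenvalue $\xi\,b^*a$ vanishes. It is normal exactly when $a$ and $b$ are parallel: $(ab^*)(ab^*)^*=\|b\|^2aa^*$ and $(ab^*)^*(ab^*)=\|a\|^2bb^*$ coincide only if $aa^*=bb^*$. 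The plan is therefore to compute the two inner products $b^*a$ and $d^*c$ explicitly in the basis $e_1,e_2$, and to check that they are nonzero and of modulus strictly less than one.

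For the first step, \eqref{eq:extremal-left} and \eqref{eq:extremal-right} give
\[
 b^*a=\tfrac12\Bigl[\sqrt{(1+t_0)(1-x_0)}-\sqrt{(1-t_0)(1+x_0)}\Bigr],\qquad
 d^*c=\tfrac12\Bigl[\sqrt{(1+t_0)(1-x_0)}-\sqrt{(1-t_0)(1+x_0)}\Bigr],
\]
so the two values coincide. The bracket vanishes iff $(1+t_0)(1-x_0)=(1-t_0)(1+x_0)$, which simplifies to $t_0=x_0$. By Theorem~\ref{thm:main}, $0<x_0<t_0<1$, so $t_0\neq x_0$ and $b^*a=d^*c>0$. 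Hence $\operatorname{tr}X=\xi\,b^*a\neq0$ and $\operatorname{tr}Y\neq0$, and neither matrix is nilpotent.

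For non-normality, $a,b$ are unit vectors, so they are parallel iff $\abs{b^*a}=1$. The square roots above are nonnegative, and each is at most $\sqrt{2}$ times the corresponding factor, so the bound is easiest through the identity
\[
 b^*a=\sin(\theta-\varphi),\qquad\text{where } \cos\theta=\sqrt{\tfrac{1+t_0}{2}},\quad \cos\varphi=\sqrt{\tfrac{1+x_0}{2}},\quad \theta,\varphi\in(0,\pi/2).
\]
Then $\abs{b^*a}=1$ would require $\abs{\theta-\varphi}=\pi/2$, which is impossible with both angles in $(0,\pi/2)$. Alternatively, one can note directly that $\abs{b^*a}\le\tfrac12\max\{\sqrt{(1+t_0)(1-x_0)},\sqrt{(1-t_0)(1+x_0)}\}<1$. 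The same bound applies to $d^*c$. Thus neither $X$ nor $Y$ is normal.

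The argument is short, and the only point needing care is correct bookkeeping of the signs in \eqref{eq:extremal-left} and \eqref{eq:extremal-right}. Getting these signs right is what makes the value $b^*a=d^*c$, and hence the nonvanishing, reduce to $x_0\neq t_0$, which is exactly the strict inequality supplied by Theorem~\ref{thm:main}.
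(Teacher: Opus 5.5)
Your proof is correct and follows the paper's argument. You characterize nilpotence by $b^*a=0$ and normality by $\abs{b^*a}=1$, compute $b^*a=d^*c=\frac12\bigl[\sqrt{(1+t_0)(1-x_0)}-\sqrt{(1-t_0)(1+x_0)}\bigr]$, use $x_0<t_0$ to get positivity, and use the bound $b^*a<\frac12\sqrt{(1+t_0)(1-x_0)}<1$ (your ``alternative'') to exclude normality. One small slip: with your choice of angles the identity is actually $b^*a=\sin(\varphi-\theta)$, not $\sin(\theta-\varphi)$, which is harmless because only $\abs{b^*a}$ is used there.
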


\begin{proof}
For unit vectors $a,b$ and every integer $m\ge2$, the identities
\[
 (ab^*)^m=(b^*a)^{m-1}ab^*,\qquad
 (ab^*)(ab^*)^*=aa^*,\qquad (ab^*)^*(ab^*)=bb^*
\]
show that $ab^*$ is nilpotent exactly when $b^*a=0$, and normal
exactly when $aa^*=bb^*$, or equivalently $\abs{b^*a}=1$.

For the unit vectors in~\eqref{eq:extremal-left} and~\eqref{eq:extremal-right},
\[
 0<b^*a=d^*c
 =\frac{1}{2}\left[\sqrt{(1+t_0)(1-x_0)}
                   -\sqrt{(1-t_0)(1+x_0)}\right]
 <\frac{1}{2}\sqrt{(1+t_0)(1-x_0)}<1.
\]
Thus $0<\abs{b^*a},\abs{d^*c}<1$, excluding both nilpotence and
normality for $X$ and $Y$.
\end{proof}

\section{An application to density matrices}\label{sec:density}

A density matrix is a positive semidefinite Hermitian matrix of
trace one. In quantum
mechanics, it represents a state of a finite-dimensional system.

Let $\rho,\sigma\in\C^{n\times n}$ be density matrices.
The matrices $\rho$ and $\sigma$ commute if and only if they are
simultaneously diagonalizable by a unitary matrix. Their diagonal entries then give
probability distributions in a common orthonormal basis.
Iyengar, Chandan, and Srikanth~\cite{ICS} proposed
$2\norm{\rho\sigma-\sigma\rho}_{\F}^2$ as a measure of the
incompatibility of two density matrices and proved
\begin{equation}\label{eq:density-universal}
 \norm{\rho\sigma-\sigma\rho}_{\F}^2\le\frac12.
\end{equation}
The same quantity was used by Jing, Wu, and del Campo~\cite{JWC}
to study the time required for quantum dynamics to generate
incompatibility between an initial state and an evolved state.

For a fixed positive definite Hermitian matrix $A$, the quantities
$\tr(A\rho)$ and $\tr(A\sigma)$ are its expectation values in the
two states. We determine the smallest constant $K_A$ for which
\begin{equation}\label{eq:density-main}
 \norm{\rho\sigma-\sigma\rho}_{\F}^2
 \le K_A\tr(A\rho)\tr(A\sigma)
\end{equation}
holds for all density matrices $\rho,\sigma\in\C^{n\times n}$.
The constant depends only on the two smallest eigenvalues of $A$,
and every equality pair consists of rank-one density matrices.
For every $A$ with $\lambda_1<\lambda_2$,
\cref{cor:density-comparison} also determines the exact maximum of
$\norm{\rho\sigma-\sigma\rho}_{\F}^2$ under any common upper bound
at least $\lambda_1$ on the two expectation values. It identifies the full range of
bounds for which this maximum is strictly smaller than $1/2$.

\begin{theorem}\label{thm:density-main}
The sharp constant in~\eqref{eq:density-main} is
\begin{equation}\label{eq:density-constant}
 K_A=\frac{8\tau^2(1-\tau^2)}
 {\bigl[\lambda_1(1+\tau)+\lambda_2(1-\tau)\bigr]^2},
\end{equation}
where $\tau\in[1/\sqrt2,1)$ is the unique solution of
\begin{equation}\label{eq:density-tau}
 \frac{2\tau^2-1}{\tau^3}
 =\frac{\lambda_2-\lambda_1}{\lambda_2+\lambda_1}.
\end{equation}
If $\lambda_1=\lambda_2$, then $\tau=1/\sqrt2$ and
$K_A=1/(2\lambda_1^2)$.

Equality in~\eqref{eq:density-main} holds if and only if $\rho=bb^*$ and
$\sigma=dd^*$, where $b,d$ have the forms in~\eqref{eq:right-vectors} with $t=\tau$
and $e_1,e_2\in\C^n$ are orthonormal eigenvectors of $A$ corresponding
to the eigenvalues $\lambda_1$ and $\lambda_2$, respectively.
For real symmetric $A$, the sharp constant for real density matrices
is also $K_A$.
\end{theorem}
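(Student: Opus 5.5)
The plan is to reduce to pure states by convexity, evaluate the commutator explicitly for rank-one projections, and then maximize a one-variable function using \cref{lem:Rayleigh}.

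\emph{Reduction to pure states.} Fix $\sigma$ and a constant $K$, and consider
\[
 g(\rho)=\norm{\rho\sigma-\sigma\rho}_{\F}^2-K\tr(A\rho)\tr(A\sigma).
\]
Here $\rho\mapsto\norm{\rho\sigma-\sigma\rho}_{\F}^2$ is the squared norm of a linear map, hence convex, and the second term is linear in $\rho$. So $g$ is convex on the compact convex set of density matrices. Its maximum is therefore attained at an extreme point, that is, at a pure state $bb^*$. Applying the same argument in $\sigma$ shows that the sharp constant $K_A$ is the maximum of the ratio over $\rho=bb^*$ and $\sigma=dd^*$ with unit vectors $b,d$.

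\emph{Evaluation for pure states.} For such $\rho,\sigma$ put $t=\abs{b^*d}$. Expanding the Frobenius norm gives
\[
 \norm{[bb^*,dd^*]}_{\F}^2=2t^2-2t^4=2t^2(1-t^2),
\]
which depends only on $t$. The denominator is $(b^*Ab)(d^*Ad)\ge D(t)$. Hence
\[
 K_A=\max_{0\le t\le1}\frac{2t^2(1-t^2)}{D(t)}.
\]
Attainment follows from \cref{lem:Rayleigh}: the minimizing $b,d$ lie in the plane spanned by the eigenvectors for $\lambda_1,\lambda_2$, and this also settles the real case.

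\emph{The scalar maximization.} Write $s=(\lambda_2-\lambda_1)/(\lambda_2+\lambda_1)$.
\begin{itemize}
\item On $[0,s]$, \eqref{eq:D-first} gives the ratio $2t^2/(\lambda_1\lambda_2)$, which is increasing.
\item On $(s,1]$, \eqref{eq:D-second} gives
\[
 G(t)=\frac{8t^2(1-t^2)}{\bigl[\lambda_1(1+t)+\lambda_2(1-t)\bigr]^2}.
\]
Since $\lambda_1(1+t)+\lambda_2(1-t)=(\lambda_1+\lambda_2)(1-st)$, the logarithmic derivative is
\[
 \frac{2}{t}-\frac{2t}{1-t^2}+\frac{2s}{1-st}.
\]
Clearing denominators, its sign is the sign of $1-2t^2+st^3$, that is, of $s-(2t^2-1)/t^3$.
\end{itemize}
The function $\psi(t)=(2t^2-1)/t^3$ has derivative $(3-2t^2)/t^4>0$ on $[1/\sqrt2,1]$, with $\psi(1/\sqrt2)=0$ and $\psi(1)=1$. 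This gives a unique $\tau$ solving \eqref{eq:density-tau}. Moreover $\tau>s$, because $\psi(\tau)<\tau$ is equivalent to $(\tau^2-1)^2>0$. Thus $G$ increases on $(s,\tau]$ and decreases on $[\tau,1]$. Together with continuity at $t=s$, the point $\tau$ is the unique global maximizer, which yields \eqref{eq:density-constant}. When $\lambda_1=\lambda_2$ we have $s=0$, so $\tau=1/\sqrt2$ and $K_A=8\cdot\frac12\cdot\frac12/(4\lambda_1^2)=1/(2\lambda_1^2)$.

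\emph{Equality cases (main obstacle).} For pure states, equality forces $t=\tau$ and equality in $D(\tau)$. When $\tau>s$ and $\tau<1$, the equality part of \cref{lem:Rayleigh} gives the form \eqref{eq:right-vectors}. In the case $\lambda_1=\lambda_2$, I would instead argue directly that both Rayleigh quotients equal $\lambda_1$, so $b,d$ lie in that eigenspace, and then build $e_1,e_2$ from $b\pm d$.

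The hardest step is excluding mixed states. Suppose $\rho=\sum p_jb_jb_j^*$ attains equality. Convexity of $g$, with maximum value $0$, forces every pure component to be a maximizer as well. The plan is then to show that for the fixed $\sigma$ this makes $g$ vanish on the whole segment joining two components. Since $g$ is a convex quadratic, this means its quadratic part vanishes along that direction, so $[b_1b_1^*-b_2b_2^*,\sigma]=0$.

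To rule this out, I would use the explicit form of the maximizers in the $e_1,e_2$ plane. This should show that $\sigma$ must also be pure and that, for fixed $d$, the admissible $b$ is unique up to phase. The delicate point is the degenerate case, where I would check directly that the commutator condition fails for distinct admissible $b$'s. Then all components coincide, $\rho$ is pure, and the symmetric argument applies to $\sigma$.
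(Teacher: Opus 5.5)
Your determination of $K_A$ is correct and is essentially the paper's argument. You reduce to pure states by maximizing the convex function $g$ over extreme points one variable at a time, where the paper applies Jensen's inequality to the joint spectral decompositions. After that, the identity $\norm{[bb^*,dd^*]}_{\F}^2=2t^2(1-t^2)$, the bound by $D(t)$, and the sign analysis through $(2t^2-1)/t^3$ (including $\tau>s$) all match.

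The gap is in excluding mixed equality pairs when $\lambda_1=\lambda_2$. There the maximizers are far from unique: for fixed $d$, any unit $b$ in the eigenspace with $\abs{b^*d}=1/\sqrt2$ works. So ``the explicit form of the maximizers'' cannot show that $\sigma$ is pure. The condition $[b_1b_1^*-b_2b_2^*,\sigma]=0$ does not exclude a mixed $\sigma$ either: it holds for $\sigma=\frac12(b_1b_1^*+b_2b_2^*)$. Your direct check that ``the commutator condition fails for distinct admissible $b$'s'' is therefore valid only once the state held fixed is known to be pure. As written, the degenerate case has no starting point: purity of $\rho$ is obtained from purity of $\sigma$, and purity of $\sigma$ from the symmetric argument, which presupposes purity of $\rho$.

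The missing step is to apply your own convexity argument to the equality pair $(b_1b_1^*,\sigma)$, which your first step already provides, holding the pure state $b_1b_1^*$ fixed. This shows that each $(b_1,d_k)$ is a pure maximizer, so $\abs{b_1^*d_k}=\tau$. If $\sigma$ has orthonormal components $d_1,d_2$, the segment argument also gives $[b_1b_1^*,d_1d_1^*-d_2d_2^*]=0$. But the $(d_1,d_2)$ entry of this commutator is $-2(d_1^*b_1)(b_1^*d_2)$, of modulus $2\tau^2>0$. Hence $\sigma$ is pure. The same computation with $\sigma=dd^*$ gives the $(b_1,b_2)$ entry $2(b_1^*d)(d^*b_2)\neq0$, so $\rho$ is pure.

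This works uniformly for $\lambda_1\le\lambda_2$ and makes your uniqueness-of-partner step unnecessary. That step is valid when $\lambda_1<\lambda_2$, since then $bb^*=(2P_1-I)dd^*(2P_1-I)$ with $P_1$ the projection onto the $\lambda_1$-eigenspace. The fix is exactly the paper's route: strict convexity in the joint Jensen step gives $[\psi_i\psi_i^*,\chi_j\chi_j^*]=[\rho,\sigma]$ for all $i,j$ at once, and a single off-diagonal entry yields the contradiction.
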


\begin{proof}
Let $b,d\in\C^n$ be unit vectors, so $bb^*$ and $dd^*$ are
rank-one density matrices. Let $t=\abs{b^*d}$.
Since $bb^*dd^*-dd^*bb^*=(b^*d)bd^*-(d^*b)db^*$, expanding the
squared Frobenius norm gives
\begin{equation}\label{eq:density-pure-numerator}
 \norm{bb^*dd^*-dd^*bb^*}_{\F}^2=2t^2(1-t^2).
\end{equation}
The definition of $D(t)$ therefore gives
\begin{equation}\label{eq:density-pure-quotient}
 \frac{\norm{bb^*dd^*-dd^*bb^*}_{\F}^2}{(b^*Ab)(d^*Ad)}
 =\frac{2t^2(1-t^2)}{(b^*Ab)(d^*Ad)}
 \le\frac{2t^2(1-t^2)}{D(t)}.
\end{equation}

For arbitrary density matrices $\rho$ and $\sigma$, write their
spectral decompositions as
\[
 \rho=\sum_{i=1}^r p_i \psi_i \psi_i^*,\qquad
 \sigma=\sum_{j=1}^s q_j \chi_j \chi_j^*,
\]
where $\psi_1,\ldots,\psi_r$ are orthonormal eigenvectors of $\rho$,
$\chi_1,\ldots,\chi_s$ are orthonormal eigenvectors of $\sigma$,
$p_i,q_j>0$, and $\sum_i p_i=\sum_jq_j=1$.
By~\eqref{eq:D-first} and~\eqref{eq:D-second}, $D$ is positive
and continuous on $[0,1]$, so $2t^2(1-t^2)/D(t)$ attains its maximum.
Since $\sum_{i,j}p_iq_j=1$, convexity of the squared Frobenius norm
and~\eqref{eq:density-pure-quotient} applied to each pair $\psi_i,\chi_j$ give
\begin{align}
 \norm{\rho\sigma-\sigma\rho}_{\F}^2
 &=\left\lVert\sum_{i,j}p_iq_j
       (\psi_i \psi_i^*\chi_j\chi_j^*-\chi_j\chi_j^*\psi_i \psi_i^*)\right\rVert_{\F}^{2}\notag\\
 &\le\sum_{i,j}p_iq_j
       \norm{\psi_i \psi_i^*\chi_j\chi_j^*-\chi_j\chi_j^*\psi_i \psi_i^*}_{\F}^2\notag\\
 &\le\left(\max_{0\le t\le1}\frac{2t^2(1-t^2)}{D(t)}\right)
       \sum_{i,j}p_iq_j(\psi_i^*A\psi_i)(\chi_j^*A\chi_j)\notag\\
 &=\left(\max_{0\le t\le1}\frac{2t^2(1-t^2)}{D(t)}\right)
       \tr(A\rho)\tr(A\sigma).
 \label{eq:density-mixed-bound}
\end{align}
For the reverse bound, \cref{lem:Rayleigh} shows that equality
in~\eqref{eq:density-pure-quotient} is attained for every $t\in[0,1]$.
Since $\tr(Abb^*)=b^*Ab$ and $\tr(Add^*)=d^*Ad$,
applying~\eqref{eq:density-main} with $\rho=bb^*$ and $\sigma=dd^*$
and maximizing over all unit vectors $b,d$ gives
\[
 K_A\ge
 \max_{\norm{b}=\norm{d}=1}
 \frac{\norm{bb^*dd^*-dd^*bb^*}_{\F}^2}{(b^*Ab)(d^*Ad)}
 =\max_{0\le t\le1}\frac{2t^2(1-t^2)}{D(t)}.
\]
Together with~\eqref{eq:density-mixed-bound}, this gives
\begin{equation}\label{eq:density-scalar-reduction}
 K_A=\max_{0\le t\le1}\frac{2t^2(1-t^2)}{D(t)}.
\end{equation}

We now determine the maximum in~\eqref{eq:density-scalar-reduction}.
For $0<t<(\lambda_2-\lambda_1)/(\lambda_2+\lambda_1)$,
equation~\eqref{eq:D-first} gives
\[
 \frac{\mathrm d}{\mathrm dt}\left(\frac{2t^2(1-t^2)}{D(t)}\right)
 =\frac{4t}{\lambda_1\lambda_2}>0.
\]
For $(\lambda_2-\lambda_1)/(\lambda_2+\lambda_1)<t<1$,
equation~\eqref{eq:D-second} gives
\begin{equation}\label{eq:density-quotient-derivative}
 \frac{\mathrm d}{\mathrm dt}\left(\frac{2t^2(1-t^2)}{D(t)}\right)
 =\frac{16(\lambda_1+\lambda_2)t^4}
 {\bigl[\lambda_1(1+t)+\lambda_2(1-t)\bigr]^3}
 \biggl(\frac{\lambda_2-\lambda_1}{\lambda_2+\lambda_1}
       -\frac{2t^2-1}{t^3}\biggr).
\end{equation}
\Needspace{7\baselineskip}
Since
\[
 \frac{\mathrm d}{\mathrm dt}\left(\frac{2t^2-1}{t^3}\right)
 =\frac{3-2t^2}{t^4}>0\qquad(0<t<1),
\]
the function $(2t^2-1)/t^3$ is strictly increasing in $(0,1)$.
Its values at $t=1/\sqrt2$ and $t=1$ are $0$ and $1$, respectively.
Because $0\le(\lambda_2-\lambda_1)/(\lambda_2+\lambda_1)<1$,
there is a unique $\tau\in[1/\sqrt2,1)$ satisfying~\eqref{eq:density-tau}.
Moreover,~\eqref{eq:density-tau} gives
\begin{equation}\label{eq:density-tau-threshold}
 \tau-\frac{\lambda_2-\lambda_1}{\lambda_2+\lambda_1}
 =\frac{(1-\tau^2)^2}{\tau^3}>0.
\end{equation}
By~\eqref{eq:density-tau}, the derivative in~\eqref{eq:density-quotient-derivative}
is positive for
$(\lambda_2-\lambda_1)/(\lambda_2+\lambda_1)<t<\tau$ and negative
for $\tau<t<1$. Together with~\eqref{eq:D-first} and the continuity
of $D$, these derivative signs
show that $2t^2(1-t^2)/D(t)$ increases strictly on $[0,\tau]$ and
decreases strictly on $[\tau,1]$. Hence $\tau$ is its unique
maximizer. Substituting $t=\tau$ into~\eqref{eq:D-second}
and~\eqref{eq:density-scalar-reduction} gives~\eqref{eq:density-constant}.
If $\lambda_1=\lambda_2$, equations~\eqref{eq:density-constant}
and~\eqref{eq:density-tau} give $\tau=1/\sqrt2$ and
$K_A=1/(2\lambda_1^2)$.

Suppose equality holds in~\eqref{eq:density-main}.
By~\eqref{eq:density-scalar-reduction}, equality must then hold
in both inequalities of~\eqref{eq:density-mixed-bound}.
We first show that $\rho$ and $\sigma$ have rank one.
Because every $p_iq_j$ is positive, strict convexity of the squared
Frobenius norm shows that equality in the first inequality
of~\eqref{eq:density-mixed-bound} requires
\begin{equation}\label{eq:density-commutators-equal}
 \psi_i \psi_i^*\chi_j\chi_j^*-\chi_j\chi_j^*\psi_i \psi_i^*
 =\rho\sigma-\sigma\rho
 \qquad\textup{for every }i,j.
\end{equation}
Since every $p_iq_j$ is positive, equality in the second inequality
of~\eqref{eq:density-mixed-bound} holds for each pair $\psi_i,\chi_j$.
Equations~\eqref{eq:density-pure-quotient} and~\eqref{eq:density-scalar-reduction}
therefore give, for every $i,j$,
\[
 K_A=
 \frac{\norm{\psi_i \psi_i^*\chi_j\chi_j^*-\chi_j\chi_j^*\psi_i \psi_i^*}_{\F}^2}
 {(\psi_i^*A\psi_i)(\chi_j^*A\chi_j)}
 \le\frac{2\abs{\psi_i^*\chi_j}^2\bigl(1-\abs{\psi_i^*\chi_j}^2\bigr)}
 {D(\abs{\psi_i^*\chi_j})}\le K_A.
\]
Both inequalities are therefore equalities. Since $\tau$ is the
unique maximizer in~\eqref{eq:density-scalar-reduction}, we obtain
\begin{equation}\label{eq:density-overlaps}
 \abs{\psi_i^*\chi_j}=\tau\qquad\textup{for every }i,j.
\end{equation}
If $r\ge2$, fix $j\in\{1,\ldots,s\}$.
Since $\psi_1,\psi_2$ are orthonormal, multiplying
\eqref{eq:density-commutators-equal} with $i=1,2$
on the left by $\psi_1^*$ and on the right by $\psi_2$ gives
\[
 (\psi_1^*\chi_j)(\chi_j^*\psi_2)
 =\psi_1^*(\rho\sigma-\sigma\rho)\psi_2
 =-(\psi_1^*\chi_j)(\chi_j^*\psi_2).
\]
Thus $(\psi_1^*\chi_j)(\chi_j^*\psi_2)=0$, whereas~\eqref{eq:density-overlaps}
gives $\abs{(\psi_1^*\chi_j)(\chi_j^*\psi_2)}=\tau^2>0$.
This contradiction proves $r=1$. Interchanging $\rho$ and $\sigma$
gives $s=1$.

Since $r=s=1$, we have $p_1=q_1=1$.
Let $b=\psi_1$ and $d=\chi_1$, so $\rho=bb^*$ and $\sigma=dd^*$.
Equation~\eqref{eq:density-overlaps} gives $\abs{b^*d}=\tau$, and equality
in~\eqref{eq:density-pure-quotient} gives $(b^*Ab)(d^*Ad)=D(\tau)$.
Multiplying $d$ by a scalar of modulus one preserves $dd^*$,
so we may assume $b^*d=\tau$. If $\lambda_1<\lambda_2$,
then~\eqref{eq:density-tau-threshold} and the equality condition
in~\cref{lem:Rayleigh} show that $b,d$ have the forms
in~\eqref{eq:right-vectors} with $t=\tau$ for orthonormal
eigenvectors $e_1,e_2$ of $A$ corresponding to
$\lambda_1,\lambda_2$, respectively.
If $\lambda_1=\lambda_2$, then~\eqref{eq:D-second} gives
$(b^*Ab)(d^*Ad)=D(\tau)=\lambda_1^2$.
Since each factor is at least $\lambda_1$, both equal $\lambda_1$.
Thus $Ab=\lambda_1b$ and $Ad=\lambda_1d$. The vectors $e_1,e_2$ defined by~\eqref{eq:right-basis} with $t=\tau$
are orthonormal eigenvectors of $A$ corresponding to $\lambda_1$
and give the forms in~\eqref{eq:right-vectors} with $t=\tau$.

Conversely, let $b,d$ have the forms in~\eqref{eq:right-vectors} with
$t=\tau$, where $e_1,e_2$ are orthonormal and satisfy
$Ae_1=\lambda_1e_1$ and $Ae_2=\lambda_2e_2$.
Then
\begin{equation}\label{eq:density-attainment}
 b^*d=\tau,\qquad
 b^*Ab=d^*Ad=
 \frac{1}{2}\bigl[\lambda_1(1+\tau)+\lambda_2(1-\tau)\bigr].
\end{equation}
Equations~\eqref{eq:density-pure-numerator}, \eqref{eq:density-constant},
and~\eqref{eq:density-attainment} give
\[
 \norm{bb^*dd^*-dd^*bb^*}_{\F}^2
 =2\tau^2(1-\tau^2)=K_A(b^*Ab)(d^*Ad).
\]
Thus $\rho=bb^*$ and $\sigma=dd^*$ attain~\eqref{eq:density-main}.
For real symmetric $A$, the eigenvectors $e_1,e_2$ may be chosen
real, so real density matrices attain the same sharp constant
as in the complex case.
\end{proof}

When $A=I_n$, equations~\eqref{eq:density-constant} and~\eqref{eq:density-tau}
give $K_A=1/2$, so~\eqref{eq:density-main} recovers~\eqref{eq:density-universal}.
For $\lambda_1<\lambda_2$, the following corollary determines the
exact maximum of $\norm{\rho\sigma-\sigma\rho}_{\F}^2$ under a common
upper bound on $\tr(A\rho)$ and $\tr(A\sigma)$.

\begin{corollary}\label{cor:density-comparison}
Suppose $\lambda_1<\lambda_2$, and let $\varepsilon\ge0$.
Among all density matrices
$\rho,\sigma\in\C^{n\times n}$ satisfying
\begin{equation}\label{eq:density-expectation-constraints}
 \max\{\tr(A\rho),\tr(A\sigma)\}
 \le\lambda_1+\varepsilon(\lambda_2-\lambda_1),
\end{equation}
the maximum of $\norm{\rho\sigma-\sigma\rho}_{\F}^2$ is
\begin{equation}\label{eq:density-constrained-maximum}
 \begin{cases}
  8\varepsilon(1-\varepsilon)(1-2\varepsilon)^2,
   &\textup{if }0\le\varepsilon\le\dfrac{2-\sqrt2}{4},\\[6pt]
  \dfrac12,
   &\textup{if }\varepsilon>\dfrac{2-\sqrt2}{4}.
 \end{cases}
\end{equation}
If $0<\varepsilon<(2-\sqrt2)/4$, the maximum is attained if
and only if $\rho=bb^*$ and $\sigma=dd^*$, where $b,d$ have the
forms in~\eqref{eq:right-vectors} with $t=1-2\varepsilon$ for
orthonormal eigenvectors $e_1,e_2$ satisfying
$Ae_1=\lambda_1e_1$ and $Ae_2=\lambda_2e_2$.
\end{corollary}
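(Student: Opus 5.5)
The plan is to replace the product of expectation values used in \cref{thm:density-main} by their \emph{sum}, because the constraint \eqref{eq:density-expectation-constraints} controls the sum linearly. Write $L=\lambda_1+\varepsilon(\lambda_2-\lambda_1)$, $g(t)=2t^2(1-t^2)$ and $\varepsilon_*=(2-\sqrt2)/4$.

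\textbf{Pure-state estimate.} For unit vectors $b,d$ with $b^*d=t<1$, reuse the operators $B\ge H$ from the proof of \cref{lem:Rayleigh}. Their formulas for $b^*Hb$ and $d^*Hd$ add up to
\[
b^*Ab+d^*Ad\ \ge\ b^*Hb+d^*Hd=2th_{11}+(\lambda_1+\lambda_2)(1-t)\ \ge\ \lambda_1(1+t)+\lambda_2(1-t),
\]
using $h_{11}\ge\lambda_1$. This bound also holds for $t=1$, since then both Rayleigh quotients are at least $\lambda_1$. Rearranging gives
\[
t\ \ge\ 1-\frac{u}{\lambda_2-\lambda_1},\qquad u=b^*Ab+d^*Ad-2\lambda_1 .
\]
Since $g$ increases on $[0,1/\sqrt2]$ and decreases on $[1/\sqrt2,1]$, together with \eqref{eq:density-pure-numerator} this yields
\[
\norm{bb^*dd^*-dd^*bb^*}_{\F}^2\le G\!\left(\frac{u}{2(\lambda_2-\lambda_1)}\right),
\]
where $G(\eta)=8\eta(1-\eta)(1-2\eta)^2$ for $0\le\eta\le\varepsilon_*$ and $G(\eta)=1/2$ for $\eta>\varepsilon_*$. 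Indeed, at $t=1-2\eta$ we have $1-t^2=4\eta(1-\eta)$, so $g(1-2\eta)=G(\eta)$.

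\textbf{Passing to mixed states.} Next I will show that $G$ is concave and nondecreasing on $[0,\infty)$. Put $w=\eta(1-\eta)$; then $(1-2\eta)^2=1-4w$ and $G=8w-32w^2$. This is concave and nondecreasing in $w$ for $w\le 1/8$, which corresponds to $\eta\le\varepsilon_*$, and $w$ is concave and increasing in $\eta$ there. Hence $G$ is concave on $[0,\varepsilon_*]$. Its derivative vanishes at $\varepsilon_*$, so gluing the constant $1/2$ keeps $G$ concave. Now take spectral decompositions of $\rho,\sigma$ as in the proof of \cref{thm:density-main}. Convexity of the squared norm, the pure-state bound, and Jensen's inequality for $G$ give
\[
\norm{\rho\sigma-\sigma\rho}_{\F}^2\le\sum_{i,j}p_iq_jG(\eta_{ij})\le G\Bigl(\sum_{i,j}p_iq_j\eta_{ij}\Bigr),
\]
where $\eta_{ij}$ comes from the pair $\psi_i,\chi_j$. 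The weighted average of the $\eta_{ij}$ equals $[\tr(A\rho)+\tr(A\sigma)-2\lambda_1]/[2(\lambda_2-\lambda_1)]\le\varepsilon$. Monotonicity of $G$ then gives the upper bound $G(\varepsilon)$, which is \eqref{eq:density-constrained-maximum}.

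\textbf{Attainment.} Take $b,d$ of the form \eqref{eq:right-vectors} with $e_1,e_2$ eigenvectors for $\lambda_1,\lambda_2$ and $t=\max\{1-2\varepsilon,1/\sqrt2\}$. Both Rayleigh quotients equal $\lambda_1+\tfrac{1-t}{2}(\lambda_2-\lambda_1)\le L$, and the commutator equals $g(t)=G(\varepsilon)$.

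\textbf{Equality for $0<\varepsilon<\varepsilon_*$.} This is the step I expect to need the most care. Every inequality in the chain must be tight. Since $G$ is strictly concave and strictly increasing on $[0,\varepsilon_*]$, the values beyond $\varepsilon_*$ would have to be excluded, and I would track that carefully. Tightness should force $\eta_{ij}=\varepsilon$ and $\abs{\psi_i^*\chi_j}=1-2\varepsilon>0$ for all $i,j$. Convexity tightness forces \eqref{eq:density-commutators-equal}, and the same $\psi_1^*(\cdot)\psi_2$ argument as in \cref{thm:density-main} rules out rank at least two. For the resulting pure pair, equality in the sum bound together with $b^*Bb\ge b^*Hb$ and $d^*Bd\ge d^*Hd$ forces each factor to be tight. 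Then $h_{11}=\lambda_1$, $h_{12}=0$ and $B=H$, and the Courant--Fischer equality argument from \cref{lem:Rayleigh} gives eigenvectors $e_1,e_2$ and the form \eqref{eq:right-vectors} with $t=1-2\varepsilon$. Conversely, these pairs attain the maximum by the attainment computation above.
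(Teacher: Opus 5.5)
Your proof is correct, and it takes a genuinely different route from the paper. The paper never uses the sum of the expectation values. For $0<\varepsilon<(2-\sqrt2)/4$ it replaces $A$ by $A+sI_n$, with $s$ chosen so that $1-2\varepsilon$ is exactly the optimal overlap $\tau$ of \eqref{eq:density-tau} for the shifted matrix. It then applies the product inequality \eqref{eq:density-main} to $A+sI_n$, and it inherits the equality characterization wholesale from \cref{thm:density-main}, because $A$ and $A+sI_n$ have the same eigenspaces. The cases $\varepsilon=0$ and $\varepsilon\ge(2-\sqrt2)/4$ are handled separately, the latter by citing the universal bound \eqref{eq:density-universal}.

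You instead prove the linear pure-state estimate $b^*Ab+d^*Ad\ge\lambda_1(1+t)+\lambda_2(1-t)$, which also follows at once from the eigenvalues $1\pm t$ of $bb^*+dd^*$. You encode it in the concave nondecreasing envelope $G$ and pass to mixed states by Jensen. This treats every $\varepsilon\ge0$ uniformly and re-derives the bound $1/2$ rather than citing it. It also makes visible that only a constraint on $\tr(A\rho)+\tr(A\sigma)$ is needed. The cost is that the equality analysis must be redone by hand instead of being borrowed from \cref{thm:density-main}.

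The step you flagged as delicate closes in one line. For $\varepsilon<\varepsilon_*$ the slope $G'(\varepsilon)=\bigl(8-64\varepsilon(1-\varepsilon)\bigr)(1-2\varepsilon)$ is positive, and $G$ is strictly concave on $[0,\varepsilon_*]$. So the tangent line $\ell$ at $\varepsilon$ satisfies $G<\ell$ on $[0,\infty)\setminus\{\varepsilon\}$. This includes the flat part, where $G(\eta)=G(\varepsilon_*)<\ell(\varepsilon_*)<\ell(\eta)$. Hence equality in Jensen forces every $\eta_{ij}=\varepsilon$.

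After that you can even skip the commutator comparison. Since $\abs{\psi_i^*\chi_j}=1-2\varepsilon>1/\sqrt2$, Bessel's inequality $\sum_i\abs{\psi_i^*\chi_j}^2\le1$ already forces $r=1$, and symmetrically $s=1$.
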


\begin{proof}
Suppose first that $0<\varepsilon<(2-\sqrt2)/4$, and let
$t=1-2\varepsilon$, so $1/\sqrt2<t<1$.
To make $t$ the solution of~\eqref{eq:density-tau} for $A+sI_n$, let
\[
 s=\frac{1}{2}\biggl[
   \frac{(\lambda_2-\lambda_1)t^3}{2t^2-1}
   -\lambda_1-\lambda_2\biggr].
\]
Because
\[
 \lambda_1+s
 =\frac{(\lambda_2-\lambda_1)(1-t)(1+t-t^2)}{2(2t^2-1)}>0,
\]
the matrix $A+sI_n$ is positive definite. Its two smallest eigenvalues
are $\lambda_1+s$ and $\lambda_2+s$, and
\[
 \frac{\lambda_2-\lambda_1}{\lambda_1+\lambda_2+2s}
 =\frac{2t^2-1}{t^3}.
\]
Thus~\eqref{eq:density-tau} for $A+sI_n$ is satisfied by $\tau=t$.
Since $\tr\rho=\tr\sigma=1$ and $\varepsilon=(1-t)/2$,
condition~\eqref{eq:density-expectation-constraints} gives
\begin{align*}
 \max\bigl\{\tr\bigl((A+sI_n)\rho\bigr),
             \tr\bigl((A+sI_n)\sigma\bigr)\bigr\}
 &\le\lambda_1+s+\varepsilon(\lambda_2-\lambda_1)\\
 &=\frac{1}{2}\bigl[(\lambda_1+s)(1+t)+(\lambda_2+s)(1-t)\bigr].
\end{align*}
Applying~\eqref{eq:density-main} and~\eqref{eq:density-constant}
to $A+sI_n$, we obtain
\begin{align*}
 \norm{\rho\sigma-\sigma\rho}_{\F}^2
 &\le\frac{1}{4}K_{A+sI_n}
 \bigl[(\lambda_1+s)(1+t)+(\lambda_2+s)(1-t)\bigr]^2
 =2t^2(1-t^2)
 =8\varepsilon(1-\varepsilon)(1-2\varepsilon)^2.
\end{align*}
If $\rho,\sigma$ attain the value in~\eqref{eq:density-constrained-maximum},
equality must hold in~\eqref{eq:density-main} for $A+sI_n$.
Since $A+sI_n$ and $A$ have the same eigenspaces,
\cref{thm:density-main} gives $\rho=bb^*$ and $\sigma=dd^*$,
where $b,d$ have the forms in~\eqref{eq:right-vectors} with $t=1-2\varepsilon$
for orthonormal eigenvectors $e_1,e_2$ of $A$ corresponding to
$\lambda_1,\lambda_2$, respectively.

Conversely, let $b,d$ have the forms in~\eqref{eq:right-vectors}
with $t=1-2\varepsilon$ for orthonormal eigenvectors $e_1,e_2$
corresponding to $\lambda_1,\lambda_2$, respectively.
Then $\rho=bb^*$ and $\sigma=dd^*$ satisfy
\[
 \tr(A\rho)=\tr(A\sigma)
 =\frac{1}{2}\bigl[\lambda_1(1+t)+\lambda_2(1-t)\bigr]
 =\lambda_1+\varepsilon(\lambda_2-\lambda_1),
\]
and~\eqref{eq:density-pure-numerator} gives
$\norm{\rho\sigma-\sigma\rho}_{\F}^2=2t^2(1-t^2)$.
These pairs therefore attain the maximum.

If $\varepsilon=0$, both expectation values equal $\lambda_1$.
Because $\lambda_1<\lambda_2$, both density matrices must equal
$e_1e_1^*$, where $e_1$ is a unit eigenvector corresponding to
$\lambda_1$. The maximum is therefore zero.
If $\varepsilon\ge(2-\sqrt2)/4$, equation~\eqref{eq:density-universal}
gives the upper bound $1/2$. The vectors in~\eqref{eq:right-vectors}
with $t=1/\sqrt2$ and $Ae_i=\lambda_i e_i$ satisfy
\[
 b^*Ab=d^*Ad
 =\lambda_1+\frac{2-\sqrt2}{4}(\lambda_2-\lambda_1)
 \le\lambda_1+\varepsilon(\lambda_2-\lambda_1).
\]
By~\eqref{eq:density-pure-numerator}, the density matrices $bb^*,dd^*$
attain $1/2$, proving~\eqref{eq:density-constrained-maximum}.
\end{proof}

Normalizing by the traces gives an equivalent form
of~\eqref{eq:density-main} for positive semidefinite matrices.

\begin{corollary}\label{cor:density-psd}
For all positive semidefinite $R,S\in\C^{n\times n}$,
\[
 \norm{RS-SR}_{\F}^2
 \le K_A\tr(R)\tr(AR)\tr(S)\tr(AS).
\]
The constant $K_A$ is sharp. For nonzero $R,S$, equality holds if
and only if $R=rbb^*$ and $S=sdd^*$, where $r,s>0$ and $b,d$ have
the forms in~\eqref{eq:right-vectors} with $t=\tau$ for orthonormal
eigenvectors $e_1,e_2$ satisfying $Ae_1=\lambda_1e_1$ and
$Ae_2=\lambda_2e_2$.
\end{corollary}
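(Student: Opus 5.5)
The plan is to reduce Corollary~\ref{cor:density-psd} to Theorem~\ref{thm:density-main} by normalizing traces. If $R=0$ or $S=0$, both sides vanish and there is nothing to prove. Otherwise $\tr R>0$ and $\tr S>0$, because a nonzero positive semidefinite matrix has a positive eigenvalue. Set
\[
 \rho=\frac{R}{\tr R},\qquad \sigma=\frac{S}{\tr S}.
\]
These are density matrices. The commutator is bilinear, so
\[
 \norm{RS-SR}_{\F}^2=(\tr R)^2(\tr S)^2\norm{\rho\sigma-\sigma\rho}_{\F}^2 .
\]
Also $\tr(AR)=\tr(R)\tr(A\rho)$ and $\tr(AS)=\tr(S)\tr(A\sigma)$. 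Multiplying~\eqref{eq:density-main} by $(\tr R)^2(\tr S)^2$ therefore gives exactly the claimed inequality, with $\tr(R)\tr(AR)\tr(S)\tr(AS)$ on the right.

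For sharpness, apply the inequality to density matrices themselves, where $\tr R=\tr S=1$; it then reduces to~\eqref{eq:density-main}. Any constant smaller than $K_A$ would therefore contradict the sharpness in Theorem~\ref{thm:density-main}.

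For the equality characterization with $R,S$ nonzero, the same scaling shows that equality holds for $(R,S)$ if and only if it holds in~\eqref{eq:density-main} for $(\rho,\sigma)$. By Theorem~\ref{thm:density-main}, this happens if and only if $\rho=bb^*$ and $\sigma=dd^*$ with $b,d$ of the stated form. Equivalently, $R=rbb^*$ and $S=sdd^*$ with $r=\tr R>0$ and $s=\tr S>0$, using $\tr(bb^*)=\norm{b}^2=1$. Conversely, for such $R,S$ the normalized matrices are exactly $bb^*$ and $dd^*$, so equality holds.

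There is no real obstacle here. The only points needing care are the degenerate case of zero matrices, which is why the equality statement excludes it, and the check that the normalization maps the class of nonzero positive semidefinite pairs onto all pairs of density matrices. That surjectivity is what makes both sharpness and the equality characterization transfer from Theorem~\ref{thm:density-main}.
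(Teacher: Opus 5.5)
Your proposal is correct and follows the paper's proof. You treat $R=0$ or $S=0$ separately, normalize to the density matrices $R/\tr(R)$ and $S/\tr(S)$, apply \eqref{eq:density-main}, multiply by $\tr(R)^2\tr(S)^2$, and inherit sharpness and the equality cases from \cref{thm:density-main}, spelling out the equality transfer (positive scaling factor, $r=\tr R$, $s=\tr S$) in somewhat more detail than the paper.
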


\begin{proof}
If $R=0$ or $S=0$, both sides are zero. Otherwise,
$R/\tr(R)$ and $S/\tr(S)$ are density matrices, so
\eqref{eq:density-main} gives
\[
 \frac{\norm{RS-SR}_{\F}^2}{\tr(R)^2\tr(S)^2}
 \le K_A\frac{\tr(AR)}{\tr(R)}\frac{\tr(AS)}{\tr(S)}.
\]
Multiplying by $\tr(R)^2\tr(S)^2$ proves the inequality.
The equality characterization and sharpness follow from
\cref{thm:density-main}.
\end{proof}


\begin{thebibliography}{99}

\bibitem{BW}
A.~B\"ottcher and D.~Wenzel,
The Frobenius norm and the commutator,
\emph{Linear Algebra Appl.} \textbf{429} (2008), 1864--1885.
\href{https://doi.org/10.1016/j.laa.2008.05.020}{doi:10.1016/j.laa.2008.05.020}.

\bibitem{CAJ}
C.-M.~Cheng, D.~O.~Akintoye, and R.~Jiao,
Commutator bounds and region of singular values of the commutator
with a rank one matrix,
\emph{Linear Algebra Appl.} \textbf{613} (2021), 347--376.
\href{https://doi.org/10.1016/j.laa.2020.11.013}{doi:10.1016/j.laa.2020.11.013}.

\bibitem{CJ}
C.-M.~Cheng and R.~Jiao,
Proof of Wenzel's conjecture concerning singular values of the
commutator of rank one matrices,
\emph{Linear Algebra Appl.} \textbf{592} (2020), 165--174.
\href{https://doi.org/10.1016/j.laa.2020.01.028}{doi:10.1016/j.laa.2020.01.028}.

\bibitem{CL}
C.-M.~Cheng and Y.~Liang,
Singular values, eigenvalues and diagonal elements of the commutator
of $2\times2$ rank one matrices,
\emph{Electron. J. Linear Algebra} \textbf{36} (2020), 1--20.
\href{https://doi.org/10.13001/ela.2020.4955}{doi:10.13001/ela.2020.4955}.

\bibitem{FC}
W.~Fang and C.-M.~Cheng,
On some conjectures concerning the B\"ottcher--Wenzel inequality
for weighted Frobenius norms,
\emph{Linear Algebra Appl.} \textbf{718} (2025), 1--13.
\href{https://doi.org/10.1016/j.laa.2025.03.015}{doi:10.1016/j.laa.2025.03.015}.

\bibitem{ICS}
P.~Iyengar, G.~N.~Chandan, and R.~Srikanth,
Quantifying quantumness via commutators: an application to quantum walks,
arXiv:1312.1329, 2013.
\href{https://arxiv.org/abs/1312.1329}{arxiv.org/abs/1312.1329}.

\bibitem{JWC}
J.~Jing, L.-A.~Wu, and A.~del Campo,
Fundamental speed limits to the generation of quantumness,
\emph{Scientific Reports} \textbf{6} (2016), 38149.
\href{https://doi.org/10.1038/srep38149}{doi:10.1038/srep38149}.

\bibitem{KMOLC}
G.~Kimura, A.~Mayumi, H.~Ohno, J.~Lee, and D.~Chru\'sci\'nski,
Beyond Robertson--Schr\"odinger: A general uncertainty relation unveiling
hidden noncommutative trade-offs,
\emph{npj Quantum Inf.} (2026).
\href{https://doi.org/10.1038/s41534-026-01374-0}{doi:10.1038/s41534-026-01374-0}.

\bibitem{MKOC}
A.~Mayumi, G.~Kimura, H.~Ohno, and D.~Chru\'sci\'nski,
B\"ottcher--Wenzel inequality for weighted Frobenius norms and its
application to quantum physics,
\emph{Linear Algebra Appl.} \textbf{700} (2024), 35--49.
\href{https://doi.org/10.1016/j.laa.2024.07.013}{doi:10.1016/j.laa.2024.07.013}.

\bibitem{RD}
B.~Russo and H.~A.~Dye,
A note on unitary operators in $C^*$-algebras,
\emph{Duke Math. J.} \textbf{33} (1966), 413--416.
\href{https://doi.org/10.1215/S0012-7094-66-03346-1}{doi:10.1215/S0012-7094-66-03346-1}.

\bibitem{VJ}
S.~Vong and X.~Jin,
Proof of B\"ottcher and Wenzel's conjecture,
\emph{Oper. Matrices} \textbf{2} (2008), 435--442.
\href{https://doi.org/10.7153/oam-02-26}{doi:10.7153/oam-02-26}.

\bibitem{Wenzel}
D.~Wenzel,
A strange phenomenon for the singular values of commutators with
rank one matrices,
\emph{Electron. J. Linear Algebra} \textbf{30} (2015), 649--669.
\href{https://doi.org/10.13001/1081-3810.3135}{doi:10.13001/1081-3810.3135}.

\end{thebibliography}
\end{document}